\theoremstyle{plain}
\newtheorem{lemma}{Lemma}[section]
\newtheorem{theorem}[lemma]{Theorem}
\newtheorem{definition}[lemma]{Definition}
\renewcommand{\dim}{{\mathrm{dim}}}
\renewcommand{\dim}{{\mathrm{dim}}}
\newcommand{\C}{{\mathbb{C}}}
\newcommand{\A}{{\mathbb{A}}}
\renewcommand{\P}{{\mathbb{P}}}
\begin{document}
\date{} 
\title[Degenerate Second Main Theorems for Holomorphic Curves]{Degenerate Second Main Theorems for Holomorphic Curves in Different Geometric Settings}

\author{Si Duc Quang$^{1,2}$}
\author{Nguyen Van An$^3$}
\author{Tran An Hai$^3$}
\address{$^1$Department of Mathematics, School of Mathematics and Computation Science, Hanoi National University of Education, 136-Xuan Thuy, Cau Giay, Hanoi, Vietnam}
\address{$^2$Institute of Natural Sciences, Hanoi National University of Education, 136-Xuan Thuy, Cau Giay, Hanoi, Vietnam}
\address{$^3$Division of Mathematics, Banking Academy, 12-Chua Boc, Dong Da, Hanoi, Vietnam}
\email{quangsd@hnue.edu.vn; an0883@gmail.com; haita@hvnh.edu.vn}

\subjclass[2000]{Primary 32H30, 11J68; Secondary 32H04, 32H25, 14J70.}
\keywords{Holomorphic curve, second main theorem, non-integrated defect, Nochka weight, distributive constant, annulus,subspace theorem.}
 
\maketitle      
\begin{abstract}
We establish second main theorems for holomorphic curves into a projective subvary $V \subset \mathbb{P}^n(\mathbb{C})$ of dimension $k$, intersecting hypersurfaces in $N$-subgeneral position with respect to $V$ $(N > k)$. Our results provide explicit truncation levels for the counting functions that are independent of the number of hypersurfaces. The theorems are obtained in several settings, including holomorphic curves on $\mathbb{C}$, annuli, complex discs with finite growth index, and K\"ahler manifolds. We obtain a total defect bound that improves upon the previously known results. As an application, we establish a corresponding form of Schmidt's subspace theorem for families of homogeneous polynomials in subgeneral position.
\end{abstract}
\tableofcontents

\section{Introduction and Main results} 

The value distribution theory for meromorphic functions on $\C$ was initiated by R. Nevanlinna \cite{N} in 1926 and subsequently extended to higher dimensions. In 1933, H. Cartan \cite{C} established a second main theorem for linearly nondegenerate meromorphic mappings from $\C^m$ into $\P^n(\C)$ with respect to hyperplanes in general position. 

Using Nochka weights, E.  I. Nochka \cite{Noc83} generalized Cartan's theorem to hyperplanes in subgeneral position. More precisely, he proved the following.

\vskip0.2cm
\noindent
\textbf{Theorem A} (cf. \cite{Noc83,No05}). {\it Let $f: \C^m \to \P^n(\C)$ be a linearly nondegenerate meromorphic mapping and let $\{H_i\}_{i=1}^q$ be hyperplanes in $\P^n(\C)$ in $N$-subgeneral position. Then
$$
\|\ (q - 2N + n - 1)T_f(r) \leq \sum_{i=1}^q N^{[n]}(r,\nu_{(H_i,f)}) + o(T_f(r)).
$$}

Here $\|\ P$ means that $P$ holds for all $r \in [1,+\infty)$ outside a set of finite linear measure. The functions $T_f(r)$ and $N^{[n]}(r,\nu_{(H_i,f)})$ denote the characteristic function and the counting function truncated to level $n$ of $f^*H_i$, respectively. The family $\{H_i\}_{i=1}^q$ is said to be in $N$-subgeneral position if
$$\bigcap_{j=0}^{N} H_{i_j} = \varnothing \quad \text{for all } 1 \le i_0 < \cdots < i_N \le q.$$

In 1979, B. Shiffman \cite{S} conjectured that the total defect of an algebraically nondegenerate holomorphic curve $f: \C \to \P^n(\C)$ with respect to hypersurfaces in general position is bounded by $n+1$. This was proved by M. Ru \cite{MR} in 2004 via a second main theorem with optimal defect bound. M. Ru \cite{Ru09} later extended this to hypersurfaces in general position with respect to a projective subvariety. Using the hypersurface replacing technique, S. D. Quang \cite{Q19} further treated the subgeneral position case.

We recall the following definition.

\vskip0.2cm
\noindent
\textbf{Definition C}. {\it Let $V \subset \P^n(\C)$ be a projective subvariety of dimension $k$, and let $\mathcal Q=\{Q_1,\ldots,Q_q\}$ be hypersurfaces in $\P^n(\C)$. The family $\mathcal Q$ is in $N$-subgeneral position with respect to $V$ if
$$\bigcap_{j=0}^{N} (Q_{i_j} \cap V) = \varnothing \quad \text{for all } 1 \le i_0 < \cdots < i_N \le q.$$
If $N=k$, then $\mathcal Q$ is in general position with respect to $V$.}

The result of S. D. Quang \cite{Q19} is stated as follows.
 
\vskip0.2cm
\noindent
\textbf{Theorem D} (cf. \cite{Q19}) {\it  Let $V\subset\P^n(\C)$ be a smooth complex projective variety of dimension $k\ge 1$. Let $Q_1,\ldots,Q_q$ be hypersurfaces in $\P^n(\C)$ in $N-$subgeneral position with respect to $V$. Let $d_i=\deg Q_i\ (1\le i\le q)$ and let $d$ be the least common multiple of $d_1,\ldots,d_q$, i.e., $d=lcm(d_1,\ldots,d_q)$. Let $f:\C^m\to V$ be an algebraically nondegenerate meromorphic mapping. Then, for every $\epsilon >0$, 
$$\bigl \|\ \left (q-(N-k+1)(k+1)-\epsilon\right) T_f(r)\le\sum_{i=1}^q\frac{1}{d_i}N^{[M_0]}_{Q_i(f)}(r)+o(T_f(r)),$$
where $M_0=\left\lfloor\deg (V)^{k+1}e^kd^{k^2+k}p^k(2k+4)^kl^k\epsilon^{-k}\right\rfloor$ with $p=N-k+1$ and $l=(k+1)\cdot q!$.}

\noindent
Here, by the notation $\lfloor x\rfloor$ we denote the biggest integer not exceeding the real number $x$.

Subsequently, using the notion of distributive constant for families of hypersurfaces introduced by S. D. Quang \cite{Q22}, several authors improved Theorem B by lowering the total defect bound $(N-k+1)(k+1)$. We recall the following definitions.

\vskip0.2cm
\noindent
\textbf{Definition E}. {Let $V$ be a projective subvariety of $\P^n(\C)$ of dimension $k$ and $\mathcal Q=\{Q_1,\ldots,Q_q\}$ a family of hypersurfaces in $\P^n(\C)$.

(a) The family $\mathcal Q$ is said to satisfy the weak B\'ezout property if 
$$\operatorname{codim}_V\bigl(\bigcap_{j\in I\cup J}Q_j\cap V\bigl) \leq 2$$
for every subsets $I,J\subset\{1,\ldots,q\}$ with $\operatorname{codim}_V\bigl(\bigcap_{j\in I}Q_j\bigl)=\operatorname{codim}_V\bigl(\bigcap_{j\in J}Q_j\bigl)=1$.

(b) The family $\mathcal Q$ is said to satisfy the B\'ezout property if 
$$\operatorname{codim}_V\bigl(\bigcap_{j\in I\cup J}Q_j\cap V\bigl)\le \operatorname{codim}_V\bigl(\bigcap_{j\in I}Q_j\cap V\bigl) + \operatorname{codim}_V\bigl(\bigcap_{j\in J}Q_j\cap V\bigl)$$
 for every subsets $I,J \subset \{1,\ldots,q\}.$}

We denote by $\lceil x\rceil$ the smallest integer not smaller than the real number $x$. For convenient, we set
$$ u(d,k,v,\tau,\epsilon):=\lceil \tau(2k+1)(k+1)d^kv(\tau(k+1)+\epsilon)\epsilon^{-1}\rceil $$
and
$$L(d,k,v,\tau,\epsilon):=\lfloor d^{k^2+k}v^{k+1}\tau^ke^k(2k+5)^k(\tau(k+1)\epsilon^{-1}+1)^k\rfloor$$
for positive constants $d,k,v,\tau,\epsilon$. It is easy to see that the function $\frac{L-1}{u}$ is increasing in the variable $\tau>0$.

Recently, G. Dethloff and S. D. Quang \cite{Q26} proved the second main theorem for the case of families of hypersurfaces satisfying the weak B\'ezout property as follows.

\vskip0.2cm
\noindent
\textbf{Theorem F} (reformulation, \cite{Q26}) {\it  Let $V\subset\P^n(\C)$ be a smooth complex projective variety of dimension $k\ge 1$. Let $\mathcal Q=\{Q_1,\ldots,Q_q\}$ be a family of hypersurfaces of $\P^n(\C)$ in $N-$subgeneral position with respect to $V$. Let $d_i=\deg Q_i\ (1\le i\le q)$ and $d=lcm(d_1,\ldots,d_q)$. Let $f:\C^m\to V$ be an algebraically nondegenerate meromorphic mapping. Assume that the family $\mathcal Q$ satisfies the weak B\'ezout property. Then, for every $\epsilon >0$, 
$$\biggl \|\ \left (q-\left\lfloor\frac{N-k+3}{2}\right\rfloor-\tau_0(k+1)-\epsilon\right) T_f(r)\le\sum_{i=1}^q\frac{1}{d_i}N^{[L_0-1]}(r,\nu_{Q_i,f})+o(T_f(r)),$$
where $\tau_0=\frac{N-k+2}{2}$ and $L_0=L(d,k,\deg V,\tau,\epsilon)$.}

Then the above theorem, the total defect $\left\lfloor\tau_0+\frac{1}{2}\right\rfloor-\tau_0(k+1)$, which is smaller than $(N-k+1)(k+1)$ in generally. Also the truncation level $L_0-1$ does not depend on $q$ as in Theorem D.

Assuming that the family $\mathcal Q$ satisfies the B\'ezout property, L. Shi and Q. M. Yan \cite{LY} established a second main theorem with total defect 
$$\min\left\{N-k,\ \frac{N-k}{2}+1\right\} - p(k+1),$$
where the truncation level $M_0$ depends on $q$. Subsequently, G. Heier and A. Levin \cite{HL} obtained a second main theorem with total defect $\frac{3}{2}(2N-k+1)$, but without truncation. This bound is currently the best known, though it still remains far from the expected value $2N-k+1$.

The first purpose of this paper is to establish a second main theorem for hypersurfaces in subgeneral position, achieving a slightly smaller total defect than in previous results, with an explicit estimate for the truncation level. Our first result is as follows.

\begin{theorem}\label{1.1new}
Let $V\subset\P^n(\C)$ be a smooth complex projective variety of dimension $k\ge 1$. Let $\mathcal Q=\{Q_1,\ldots,Q_q\}$ be a family of hypersurfaces of $\P^n(\C)$ in $N-$subgeneral position with respect to $V\ (N>k)$. Let $d_i=\deg Q_i\ (1\le i\le q)$ and $d=lcm(d_1,\ldots,d_q)$. Let $f:\C\to V$ be an algebraically nondegenerate meromorphic mapping. Assume that the family $\mathcal Q$ satisfies the B\'ezout property. Then, for every $\epsilon >0$, 
$$\biggl \|\ \left (q -\left\lceil\frac{(N-k)\tau}{\tau-1}\right\rceil +1-\tau(k+1)-\epsilon\right) T_f(r)\le\sum_{i=1}^q\frac{1}{d_i}N^{[L_1-1]}(r,\nu_{(Q_i,f)})+o(T_f(r)),$$
where $L_1=L(d,k,\deg V,\tau,\epsilon)$ with 
$$\tau=\begin{cases}
1 + \sqrt{\frac{N-k}{k+1}}&\text{ if }\ k<N< \frac{5k+1}{4},\\
1+\frac{2(N-k)}{k+1}&\text{ if }\ \frac{5k+1}{4}\le N.
\end{cases}.$$
\end{theorem}

\textbf{\textit{Remark.}}
\begin{enumerate}
\item If $V=\P^n(\C)$, then any family of hypersurfaces $\mathcal Q=\{Q_1,\ldots,Q_q\}$ satisfies the B\'ezout property.

\item In the above result, the truncation level $L_1-1$ is independent of $q$. To obtain an estimate of the truncation level independent of $q$, we employ a new bound for the Chow weight from \cite{Q22}, together with a technique introduced in \cite{Q25} to control the error terms arising in the estimation of Hilbert weights.

\item The total defect obtained from Theorem \ref{1.1new} is bounded by
$$
\left\lceil\frac{(N-k)\tau}{\tau-1}\right\rceil -1+\tau(k+1).
$$
A direct computation shows that: if $k+1<N<\frac{5k+1}{4}$ then
\begin{align*}
\left\lceil\frac{(N-k)\tau}{\tau-1}\right\rceil -1+\tau(k+1)&\le 2\sqrt{(N-k)(k+1)}+N+1\\
&<\frac{3}{2}\left(2N-k+1\right);
\end{align*}
if $N\ge \frac{5k+1}{4}$ then
$$\left\lceil\frac{(N-k)\tau}{\tau-1}\right\rceil -1+\tau(k+1)\le 3N-2k+\left\lfloor\frac{k}{2}\right \rfloor+1.$$ 
\end{enumerate}

The second aim of this paper is to extend the above result to holomorphic curves from an annulus
$$
\A(R_0)=\left\{z\in\C:\ \frac{1}{R_0}<|z|<R_0\right\}, \quad (1<R_0\le +\infty),
$$
into a projective subvariety of $\P^n(\C)$. The result is as follows.
\begin{theorem}\label{1.2new}
Let $V\subset\P^n(\C)$ be a smooth complex projective variety of dimension $k\ge 1$. Let $\mathcal Q=\{Q_1,\ldots,Q_q\}$ be a family of hypersurfaces of $\P^n(\C)$ in $N-$subgeneral position with respect to $V\ (N>k)$. Let $d_i=\deg Q_i\ (1\le i\le q)$ and $d=lcm(d_1,\ldots,d_q)$.  Let $f$ be an algebraically nondegenerate holomorphic curve from $\A(R_0)\ (1<R_0\le +\infty)$ into $V$ and $\epsilon$ a positive constant.

(a)  If the family $\mathcal Q$ satisfies the weak B\'ezout property then
$$ \left (q-\left\lfloor\frac{N-k+3}{2}\right\rfloor-\tau_0(k+1)-\epsilon\right)T_0(r,f)\le\sum_{i=1}^q\frac{1}{d_i}N^{[L_0-1]}(r,\nu_{(Q_i,f)})+S_f(r),$$
where $\tau_0=\frac{N-k+2}{2}$ and $L_0=L(d,k,\deg V,\tau_0,\epsilon)$.

(b) If the family $\mathcal Q$ satisfies the B\'ezout property then
$$ \left (q -\left\lceil\frac{(N-k)\tau_1}{\tau_1-1}\right\rceil +1-\tau_1(k+1)-\epsilon\right)T_0(r,f)\le\sum_{i=1}^q\frac{1}{d_i}N^{[L_1-1]}(r,\nu_{(Q_i,f)})+S_f(r),$$
where $L_1=L(d,k,\deg V,\tau_1,\epsilon)$ with
$$\tau_1=\begin{cases}
1 + \sqrt{\frac{N-k}{k+1}}&\text{ if }\ k<N< \frac{5k+1}{4},\\
1+\frac{2(N-k)}{k+1}&\text{ if }\ \frac{5k+1}{4}\le N.
\end{cases}.$$
\end{theorem}

Let $f$ be a holomorphic curve from the complex disc $\Delta(R)=\{z\in\C:\ |z|<R\}$ into $\P^n(\C)$, and let $\Omega$ denote the Fubini--Study form on $\P^n(\C)$. The characteristic function of $f$ is defined by
$$
T_f(r):=\int_{0}^{r}\frac{dt}{t}\int_{|z|<t} f^*\Omega.
$$
Following M. Ru and N. Sibony \cite{RS}, the growth index of $f$ is defined as
$$
c_f:=\inf\left\{c>0:\ \int_{0}^{R}\exp\bigl(cT_f(r)\bigr)\,dr=+\infty\right\}.
$$
For convenience, we set $c_f:=+\infty$ if the above set is empty.

The third aim of this paper is to establish the following second main theorem.

\begin{theorem}\label{1.3new}
Let $V\subset\P^n(\C)$ be a smooth complex projective variety of dimension $k\ge 1$. Let $\mathcal Q=\{Q_1,\ldots,Q_q\}$ be a family of hypersurfaces of $\P^n(\C)$ in $N-$subgeneral position with respect to $V\ (N>k)$. Let $d_i=\deg Q_i\ (1\le i\le q)$ and $d=lcm(d_1,\ldots,d_q)$. Let $f$ be an algebraically nondegenerate holomorphic curve from $B(R)\ (1<R\le +\infty)$ into $V$ and $\epsilon$ a positive constant. Then

(a)  If the family $\mathcal Q$ satisfies the weak B\'ezout property then
\begin{align*}
\biggl \|\ &\left (q-\left\lfloor\frac{N-k+3}{2}\right\rfloor-\frac{(N-k+2)(k+1)}{2}-\epsilon\right)T_f(r)\\
&\le\sum_{i=1}^q\frac{1}{d_i}N^{[L_0-1]}(r,\nu_{(Q_i,f)})+\frac{(\tau_0(k+1)+\epsilon)(c_f+\epsilon')(L_0-1)}{2du}T_f (r), \ \forall\epsilon'>0,
\end{align*}
where $\tau_0=\frac{N-k+2}{2},$ $L_0=L(d,k,\deg V,\tau_0,\epsilon)$ and $u_0=u(d,k,\deg V,\tau_0,\epsilon).$

(b) If the family $\mathcal Q$ satisfies the B\'ezout property then
\begin{align*}
\biggl \|\ &\left (q -\left\lceil\frac{(N-k)\tau_1}{\tau_1-1}\right\rceil +1-\tau_1(k+1)-\epsilon\right)T_f(r)\\
&\le\sum_{i=1}^q\frac{1}{d_i}N^{[L_1-1]}(r,\nu_{(Q_i,f)})+\frac{(\tau_1(k+1)+\epsilon)(c_f+\epsilon')(L_1-1)}{2du_1}T_f (r),\  \forall\epsilon'>0,
\end{align*}
where $L_1=L(d,k,\deg V,\tau_1,\epsilon)$ and $u_1=u(d,k,\deg V,\tau_1,\epsilon)$ with
$$\tau_1=\begin{cases}
1 + \sqrt{\frac{N-k}{k+1}}&\text{ if }\ k<N< \frac{5k+1}{4},\\
1+\frac{2(N-k)}{k+1}&\text{ if }\ \frac{5k+1}{4}\le N.
\end{cases}.$$
\end{theorem}
In this setting, the notation $\|\ P$ means that $P$ holds for all $r\in(0,R)$ outside a set $S\subset(0,R)$ satisfying
$$
\int_S \exp\bigl((c_f+\epsilon)T_f(r)\bigr)\,dr < +\infty
$$
for some $\epsilon>0$.

Let $M$ be a complete K\"ahler manifold of dimension $m$, and let $f: M \to V$ be a meromorphic mapping. Denote by $\Omega_f$ the pull-back of the Fubini--Study form $\Omega$ on $\P^n(\C)$ by $f$. Let $Q$ be a hypersurface of degree $d$ in $\P^n(\C)$ with $f(M)\not\subset Q$. 

Following Fujimoto \cite{F85}, the non-integrated defect of $f$ with respect to $Q$, truncated to level $\mu_0$, is defined by
$$
\delta_f^{[\mu_0]}(Q):= 1 - \inf\{\eta \ge 0:\ \eta \text{ satisfies }(*)\}.
$$
Here, condition $(*)$ means that there exists a bounded non-negative continuous function $h$ on $M$, whose zeros have order at least $\min\{\nu_f(Q),\mu_0\}$, such that
$$
d\eta\,\Omega_f + \frac{\sqrt{-1}}{2\pi}\partial\bar\partial \log h^2 \ge [\min\{\nu_f(Q),\mu_0\}].
$$

We now extend the above second main theorems to non-integrated defect relations for holomorphic curves from K\"ahler manifolds into projective varieties, with respect to families of hypersurfaces in subgeneral position.

\begin{theorem}\label{1.4new}
Let $M$ be an $m$-dimensional complete K\"ahler manifold  and $\omega$ be a K\"ahler form of $M.$  Assume that the universal covering of $M$ is biholomorphic to a ball in $\mathbb C^m.$ Let $f$ be an algebraically nondegenerate meromorphic map of $M$ into a subvariety $V$ of dimension $k$ in $\P^n(\C)\ (N>k)$ and $\epsilon>0$. Let $\mathcal Q=\{Q_1,\ldots,Q_q\}$ be a family of $q$ hypersurfaces in $\P^n(\C)$ in $N$-subgeneral position with respect to $V$ and $d = lcm\{\deg Q_1,\ldots,\deg Q_q\}$. Assume that for some $\rho \ge 0,$ there exists a bounded continuous function $h \geq 0$ on $M$ such that
$$\rho\Omega_f + \mathrm{dd^c}\log h^2 \geq \mathrm{ric}\ \omega.$$

(a)  If the family $\mathcal Q$ satisfies the weak B\'ezout property then
$$\sum_{j=1}^q {\delta}^{[L_0-1]}_{f} (Q_j) \le \left\lfloor\frac{N-k+3}{2}\right\rfloor+\frac{(N-k+2)(k+1)}{2}+\epsilon+\dfrac{\rho(L_0-1)(\Delta_{\mathcal Q,V}(k+1)+\epsilon)}{u_0d},$$
where $L_0=L(d,k,\deg V,\tau_0,\epsilon)$ and $u_0=u(d,k,\deg V,\tau_0,\epsilon)$ with $\tau_0=\frac{N-k+2}{2}.$ 

(b) If the family $\mathcal Q$ satisfies the B\'ezout property then
$$\sum_{j=1}^q {\delta}^{[L_1-1]}_{f} (Q_j) \le \left\lceil\frac{(N-k)\tau_1}{\tau_1-1}\right\rceil -1+\tau_1(k+1)+\epsilon +\dfrac{\rho(L-1)(\Delta_{\mathcal Q,V}(k+1)+\epsilon)}{ud},$$
where $L_1=L(d,k,\deg V,\tau_1,\epsilon)$ and $u_1=u(d,k,\deg V,\tau_1,\epsilon)$ with 
$$\tau_1=\begin{cases}
1 + \sqrt{\frac{N-k}{k+1}}&\text{ if }\ k<N< \frac{5k+1}{4},\\
1+\frac{2(N-k)}{k+1}&\text{ if }\ \frac{5k+1}{4}\le N.
\end{cases}.$$
\end{theorem}

This result also generalizes those in \cite{Q17,Q21,RSo,Y13}.

In the final section, we establish number-theoretic counterparts of Theorem D and Theorem \ref{1.1new} via Vojta’s dictionary, relating Nevanlinna theory to Diophantine approximation (see \cite{V87,V11}). In particular, we study the degenerated Schmidt’s subspace theorem for families of hypersurfaces in subgeneral position. Using the notation of Section 5, our main result is as follows.

\begin{theorem}\label{1.5new} 
Let $k$ be a number field, $S$ be a finite set of places of $k$ and let $V$ be an irreducible projective subvariety of $\P^N$ of dimension $n$ defined over of $k$. Let $\mathcal Q=\{Q_1,\ldots, Q_q\}$ be a family of $q$ homogeneous polynomials of $\bar k[x_0,\ldots,x_N]$ in $\ell$-subgeneral position with respect to $V(\bar k)\ (\ell>n)$. For each $\epsilon >0$, we have: 
\begin{itemize}
\item[(a)]  If the family $\mathcal Q$ satisfies the weak B\'ezout property then
$$\sum_{v\in S}\sum_{j=1}^q\dfrac{\lambda_{Q_j,v}({\bf x})}{\deg Q_j}\le \left(\left\lfloor\frac{\ell-n+3}{2}\right\rfloor+\frac{(\ell-n+2)(n+1)}{2}+\epsilon\right)h({\bf x})$$
for all ${\bf x}\in\P^N(k)$ outside a union of closed proper subvarieties of $V$.
\item [(b)]  If the family $\mathcal Q$ satisfies the B\'ezout property then
$$\sum_{v\in S}\sum_{j=1}^q\dfrac{\lambda_{Q_j,v}({\bf x})}{\deg Q_j}\le \left(\left\lceil\frac{(\ell-n)\tau_1}{\tau_1-1}\right\rceil -1+\tau_1(n+1)+\epsilon \right)h({\bf x})$$
for all ${\bf x}\in\P^N(k)$ outside a union of closed proper subvarieties of $V$, where
$$\tau_1=\begin{cases}
1 + \sqrt{\frac{\ell-n}{n+1}}&\text{ if }\ n<\ell\le \frac{5n+1}{4},\\
1+\frac{2(\ell-n)}{n+1}&\text{ if }\ \frac{5n+1}{4}<\ell.
\end{cases}.$$
\end{itemize}
\end{theorem}

\section{Notations and Preliminaries}

\noindent
{\bf 2.1. Nevanlinna functions on $\C$ and $\Delta (R)$}

We set $\Delta(R)=\{z\in\C\ |\ |z|<R\}\ (0<R\le +\infty)$. If $R=+\infty$ then we regard $\Delta(R)$ as the complex plane $\C$.

For a divisor $\nu$ on $\Delta(R)$, we define the counting function of $\nu$ by
$$N(r,r_0,\nu)=\int\limits_{r_0}^r \dfrac {n(t)}{t}dt \quad (0<r_0<r<R),$$
where $n(t) =\sum\limits_{|z|\leq t} \nu (z).$ For a positive integer $M$, the counting function of $\nu$ truncated to level $M$ is defined by
$$ N^{[M]}(r,r_0,\nu):=N(r,r_0,\nu^{[M]}),$$
where $\nu^{[M]}(z)=\min\ \{M,\nu(z)\}.$

Throughout this paper, the positive number $r_0$ is always chosen fixed such that $r_0<R$ and $r_0=1$ if $R=+\infty.$ We just write $N(r,\nu)$ and $N^{[M]}(r,\nu)$ for $N(r,r_0,\nu)$ and $N^{[M]}(r,r_0,\nu)$ respectively. For a meromorphic function $\varphi$ on $\Delta(R)$, denote by $\nu_\varphi$ its divisor of zeros and set
$$N_{\varphi}(r)=N(r,\nu_{\varphi}), \ N_{\varphi}^{[M]}(r)=N^{[M]}(r,\nu_{\varphi})\ (r_0<r<R).$$

Now fix a homogeneous coordinates system $(x_0 : \dots : x_n)$ on $\mathbb P^n(\mathbb C)$ and consider a holomorphic curve $f : \Delta(R)\longrightarrow \mathbb P^n(\mathbb C)$ with a reduced representation $\tilde f = (f_0 , \ldots , f_n)$. Set $\|\tilde f \| = \big(|f_0|^2 + \cdots + |f_nN|^2\big)^{1/2}$.
The characteristic function of $f$ is defined by 
$$ T_f(r):=\int_{r_0}^r\dfrac{dt}{t^{2m-1}}\int\limits_{\Delta(t)}f^*\Omega, \ (0<r_0<r<R),$$
where $\Omega$ is the Fubini-Study form on $\P^n(\C)$. By Jensen's formula, we have
\begin{align*}
T_f(r,r_0)=\frac{1}{2\pi}\int\limits_{0}^{2\pi}\log\Vert \tilde f(re^{i\theta}) \Vert d\theta -
\frac{1}{2\pi}\int\limits_{0}^{2\pi}\log\Vert \tilde f(r_0e^{i\theta}) \Vert d\theta+O(1), \text{ (as $r\rightarrow R$)}.
\end{align*}

Let $Q$ be a hypersurface in $\P^n(\C)$ of degree $d$, which has a defining homogeneous polynomial $D$ of the form
$$D({\bf x})=\sum_{I\in\mathcal T_d}a_I{\bf x}^I, $$
where $\mathcal T_d=\{(i_0,\ldots,i_N)\in\mathbb Z_{\geqslant 0}^{N+1}\ ;\ i_0+\cdots +i_N=d\}$, ${\bf x} =(x_0,\ldots,x_N)$, ${\bf x}^I=x_0^{i_0}\cdots x_N^{i_N}$ with $I=(i_0,\ldots,i_N)\in\mathcal T_d$ and $a_I\ (I\in\mathcal T_d)$ are constants, not all zeros, i.e., 
$$Q=\{(x_0:\cdots:x_n)\in\P^n(\C)\ |\ D(x_0,\ldots,x_n)=0\}.$$
The proximity function of $f$ with respect to $Q$, denoted by $m_f (r,Q)$, is defined by
$$m_f (r,Q)=\frac{1}{2\pi}\int\limits_{0}^{2\pi}\log\dfrac{\|\tilde f(re^{i\theta})\|^d}{|Q(\tilde f(re^{i\theta}))|}d\theta-\frac{1}{2\pi}\int\limits_{0}^{2\pi}\log\dfrac{\|\tilde f(r_0e^{i\theta})\|^d}{|Q(\tilde f(r_0e^{i\theta}))|}d\theta,$$
where $Q(\tilde f)=Q(f_0,\ldots,f_n)$. This definition is independent of the choice of the reduced representation of $f$. 

We denote by $\nu_{(Q,f)}$ the pullback of the divisor $Q$ by $f$. Then, $\nu_{(Q,f)}$ identifies with the divisor of zeros $\nu^0_{D(\tilde f)}$ of the function $D(\tilde f)$. By Jensen's formula, we have
$$N(r,\nu_{(Q,f)})=N_{D(\tilde f)}(r)=\frac{1}{2\pi}\int\limits_{0}^{2\pi}\log |D(\tilde f(re^{i\theta}))|d\theta-\frac{1}{2\pi}\int\limits_{0}^{2\pi}\log |D(\tilde f(r_0e^{i\theta}))|d\theta.$$
The first main theorem in Nevanlinna theory for meromorphic mappings and hypersurfaces is stated as follows.
$$ dT_f(r,r_0)=m_f (r,Q)+N(r,\nu_{(Q,f)})+O(1).$$

\noindent
{\bf 2.2. Nevanlinna functions on annuli}

For a divisor $\nu$ on $\A (R_0)$ and  for a positive integer $M$ (maybe $M= + \infty$), we define the counting function of $\nu$ as follows
$$N_0^{[M]}(r,\nu)=\int\limits_{\frac{1}{r}}^1 \dfrac {n_0^{[M]}(t)}{t}dt +\int\limits_1^r \dfrac {n_0^{[M]}(t)}{t}dt \quad (1<r<R_0),$$
\begin{align*}
\text{ where }n_0^{[M]}(t)=\begin{cases}
\sum\limits_{1\le |z|\le t}\min\{M,\nu (z)\}&\text{ if }1\le t<R_0,\\
\sum\limits_{t\le |z|<1}\min\{M,\nu (z)\}&\text{ if }\dfrac{1}{R_0}<t< 1.
\end{cases}
\end{align*}
For brevity we will omit the character $^{[M]}$ if $M= +\infty$.

For a meromorphic function  $\varphi $ on $\A(R_0)$, we define the proximity function of $\varphi$ by
$$ m_0(r,\varphi)=\dfrac{1}{2\pi}\int\limits_{0}^{2\pi}\log^+|\varphi(\dfrac{1}{r}e^{i\theta})| d\theta +\dfrac{1}{2\pi}\int\limits_{0}^{2\pi}\log^+|\varphi(re^{i\theta})| d\theta- \dfrac{1}{\pi}\int\limits_{0}^{2\pi}\log^+|\varphi(e^{i\theta})| d\theta.$$
The Nevanlinna characteristic function of $f$ is defined by
$$ T_0(r,\varphi)=m_0(r,\varphi)+N_0(r,\nu^\infty_\varphi). $$
Throughout this paper, we denote by $S_f(r)$ quantities satisfying
\begin{enumerate}
\item[(i)] in the case $R_0=+\infty$,
$$ S_\varphi(r)=O(\log (rT_0(r,\varphi)))$$ 
for $r\in (1,+\infty)$ except for a set $E$ such that $\int_{E}r^{\lambda -1}dr <+\infty$ for some $\lambda \ge 0$,
\item [(ii)] in the case $R_0<+\infty$,
$$ S_\varphi(r)=O(\log (\dfrac{T_0(r,\varphi)}{R_0-r}))\text{ as }r\longrightarrow R_0$$ 
for $r\in (1,R_0)$ except for a set $E'$ such that $\int_{E'}\dfrac{dr}{(R_0-r)^{\lambda +1}} <+\infty$ for some $\lambda \ge 0$.
\end{enumerate}
\begin{lemma}[Lemma on logarithmic derivatives \cite{CD12,KK05a,KK05b}]\label{2.1}
Let $\varphi$ be a nonzero meromorphic function on $\A (R_0)$. Then for each positive integer $k$ we have
$$ m_0\left(r,\dfrac{\varphi^{(k)}}{\varphi}\right)=S_\varphi(r)\ (1\le r <R_0). $$
\end{lemma}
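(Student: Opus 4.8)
The plan is to carry out the classical Nevanlinna argument for the logarithmic derivative---a Poisson--Jensen type representation, then Jensen's (concavity) inequality, then a Borel-type calculus lemma---but with the kernel of the annulus rather than of the disc. First I would reduce everything to $k=1$. Abbreviating the ordinary circle proximity by $m(s,\psi)=\frac{1}{2\pi}\int_0^{2\pi}\log^+|\psi(se^{i\theta})|\,d\theta$, the definition in Section~2 reads $m_0(r,\psi)=m(r,\psi)+m(1/r,\psi)-2m(1,\psi)$, where $m(1,\psi)$ is a finite constant independent of $r$ (logarithmic singularities are integrable). Writing
$$\frac{\varphi^{(k)}}{\varphi}=\prod_{j=1}^{k}\frac{\varphi^{(j)}}{\varphi^{(j-1)}}$$
and using $\log^+|ab|\le\log^+|a|+\log^+|b|$ on each of the three circles, the constant terms collapse to $O(1)$ and one obtains $m_0\big(r,\varphi^{(k)}/\varphi\big)\le\sum_{j=1}^{k}m_0\big(r,\varphi^{(j)}/\varphi^{(j-1)}\big)+O(1)$. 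Each summand is the first-derivative quantity for $\varphi^{(j-1)}$; to deduce that it is $S_\varphi(r)$ I also need that differentiation inflates the characteristic only boundedly. This follows once the $k=1$ case is in hand, since a pole of order $m$ of $\varphi$ is a pole of order $m+1\le2m$ of $\varphi'$, giving $N_0(r,\nu^\infty_{\varphi'})\le2N_0(r,\nu^\infty_\varphi)$, while the factorization $\varphi'=\varphi\cdot(\varphi'/\varphi)$ and the same decomposition of $m_0$ give $m_0(r,\varphi')\le m_0(r,\varphi)+m_0(r,\varphi'/\varphi)+O(1)$; hence $T_0(r,\varphi')=O(T_0(r,\varphi))+S_\varphi(r)$, and one iterates. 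Thus the whole lemma reduces to proving $m_0(r,\varphi'/\varphi)=S_\varphi(r)$.

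For this core estimate I would apply a Poisson--Jensen formula on the closed annulus $\{1/r\le|z|\le r\}$: for $\varphi$ meromorphic there, $\log\varphi$, and hence $\varphi'/\varphi=(\log\varphi)'$, is represented as two boundary integrals of $\log|\varphi|$ over $|z|=r$ and $|z|=1/r$ against the annulus Poisson/Green kernel, plus a regularized sum $\sum_a\frac{\mathrm{ord}_a\varphi}{z-a}$ over the zeros and poles $a$ inside. Differentiating, taking $\log^+$ of $|\varphi'/\varphi|$ on the circles $|z|=r$ and $|z|=1/r$, and then using Jensen's inequality to pull $\log^+$ inside the boundary averages, one bounds $m(r,\varphi'/\varphi)+m(1/r,\varphi'/\varphi)$ (and so $m_0(r,\varphi'/\varphi)$, up to the $O(1)$ unit-circle term) by a finite sum of terms of the shape
$$\log^+\Big(\tfrac{d}{dr}\big[\,m_0(r,\varphi)+N_0(r,\nu^0_\varphi)+N_0(r,\nu^\infty_\varphi)\,\big]\Big),\quad \log^+ n_0(r),\quad \log^+\tfrac{1}{\mathrm{dist}},$$
each of which is controlled by $\log^+T_0(r,\varphi)$ together with the derivative of a monotone function of $T_0(r,\varphi)$.

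Finally I would invoke the Borel-type calculus lemma for annuli in the two regimes separately: for the increasing function $T_0(r,\varphi)$ one has $\log^+\frac{dT_0}{dr}=O(\log^+T_0(r,\varphi)+\log r)$ on $(1,\infty)$ off a set $E$ with $\int_E r^{\lambda-1}\,dr<\infty$ when $R_0=+\infty$, and $\log^+\frac{dT_0}{dr}=O\big(\log^+\frac{T_0(r,\varphi)}{R_0-r}\big)$ as $r\to R_0$ off a set $E'$ with $\int_{E'}(R_0-r)^{-\lambda-1}\,dr<\infty$ when $R_0<+\infty$; matching these outputs against the definition of $S_\varphi(r)$ completes the proof. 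I expect the main obstacle to be exactly this last assembly: controlling the contribution of zeros and poles clustering near the two boundary circles $|z|=r$ and $|z|=1/r$ and packaging the several error terms into the precise form of $S_\varphi(r)$ with the correct exceptional set in each growth regime. This is the genuinely annulus-specific step, absent from the classical disc argument, and is where the references \cite{CD12,KK05a,KK05b} do the substantive work.
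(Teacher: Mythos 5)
The paper never proves Lemma \ref{2.1}: it is imported verbatim from \cite{CD12,KK05a,KK05b}, so there is no in-paper argument to measure your proposal against --- the substantive work you are reconstructing lives in those references. Judged on its own terms, your blueprint is the standard one and matches the strategy of the original annulus proofs: reduce to $k=1$ by telescoping $\varphi^{(k)}/\varphi=\prod_j \varphi^{(j)}/\varphi^{(j-1)}$ together with $T_0(r,\varphi')=O(T_0(r,\varphi))+S_\varphi(r)$ (your bookkeeping here is correct: the subtracted unit-circle terms in $m_0$ are $r$-independent constants absorbed into $O(1)$, the pole-order bound gives $N_0(r,\nu^\infty_{\varphi'})\le 2N_0(r,\nu^\infty_\varphi)$, and the finitely many exceptional sets can be merged under a single $\lambda$ since for $r>1$ one has $r^{\lambda_1-1}\le r^{\lambda_2-1}$ when $\lambda_1\le\lambda_2$, and analogously near $R_0$ in the finite case); then a differentiated Poisson--Jensen representation on the closed sub-annulus $\{1/r\le|z|\le r\}$; then the two-regime Borel calculus lemma matched to the two-branch definition of $S_\varphi(r)$. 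One point your sketch glosses over and that genuinely needs care: the annulus is not simply connected, so $\log\varphi$ is in general multivalued ($\varphi$ may have nonzero winding about the hole), and what Poisson--Jensen represents is the harmonic function $\log|\varphi|$; recovering $\varphi'/\varphi$ by differentiation then produces an additional period term (a constant multiple of $1/z$ coming from the harmonic conjugate), which is bounded on the circles $|z|=r,\ 1/r$ and hence harmless, but it must appear in the identity you differentiate --- your phrase ``two boundary integrals plus a regularized sum'' silently assumes single-valuedness. Likewise the annulus Green/Poisson kernel is an infinite series whose differentiated pointwise estimates must be redone rather than borrowed from the disc. With those caveats, nothing in your plan would fail; it is a correct outline with the hard kernel and exceptional-set estimates deferred to the literature --- which is exactly what the paper itself does.
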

Here by $\varphi^{(k)}$ we denote the derivative of order $k$ of the function $f$.

\begin{theorem}[First main theorem \cite{CD12,KK05a,KK05b}]
Let $f$ be a meromorphic function on $\A (R_0)$. Then for each $a\in\C$ we have
$$ T_0(r,\varphi)=T_0\left(r,\dfrac{1}{\varphi-a}\right)+S_\varphi(r)\ (1\le r <R_0).$$
\end{theorem}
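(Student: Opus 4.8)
The plan is to reduce the identity to the annulus analogue of Jensen's formula together with a bounded perturbation estimate for the constant shift by $a$. First I would record the elementary decomposition $\log x=\log^+x-\log^+(1/x)$, valid for $x>0$, and apply it pointwise to $|\varphi(se^{i\theta})|$ on a circle of radius $s$ with $1/R_0<s<R_0$, obtaining
$$\frac{1}{2\pi}\int_0^{2\pi}\log|\varphi(se^{i\theta})|\,d\theta=\frac{1}{2\pi}\int_0^{2\pi}\log^+|\varphi(se^{i\theta})|\,d\theta-\frac{1}{2\pi}\int_0^{2\pi}\log^+\Big|\frac{1}{\varphi(se^{i\theta})}\Big|\,d\theta.$$
Denoting the left-hand side by $B(s,\varphi)$ and forming the combination $B(1/r,\cdot)+B(r,\cdot)-2B(1,\cdot)$ that matches the weights in the definition of $m_0$, this gives at once
$$m_0(r,\varphi)-m_0\Big(r,\frac{1}{\varphi}\Big)=B(1/r,\varphi)+B(r,\varphi)-2B(1,\varphi).$$

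Next I would establish the annulus Jensen formula, namely that the right-hand side above equals $N_0(r,\nu^0_\varphi)-N_0(r,\nu^\infty_\varphi)$. Setting $\phi(\sigma)=B(e^\sigma,\varphi)$ and using that $\log|\varphi|$ is harmonic off the zeros and poles of $\varphi$, the function $\phi$ is continuous and piecewise affine in $\sigma$, and its slope increases (resp. decreases) by the multiplicity at each radius carrying a zero (resp. a pole). Integrating the slope from its reference value $\phi'(0)$ over $\sigma\in[0,\log r]$ and over $\sigma\in[-\log r,0]$ and converting to $t=e^\sigma$ yields
$$\phi(\log r)-\phi(0)=\phi'(0)\log r+\int_1^r\frac{n^0(t)-n^\infty(t)}{t}\,dt,$$
$$\phi(-\log r)-\phi(0)=-\phi'(0)\log r+\int_{1/r}^1\frac{n^0(t)-n^\infty(t)}{t}\,dt,$$
where $n^0(t)$, $n^\infty(t)$ are the annulus counting densities (as in Section 2) of $\nu^0_\varphi$ and $\nu^\infty_\varphi$. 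Adding the two lines, the unbounded terms $\pm\phi'(0)\log r$ cancel (this cancellation is precisely what the symmetric normalization of $m_0$ and $N_0$ is designed to produce), and the two integrals reassemble into $N_0(r,\nu^0_\varphi)-N_0(r,\nu^\infty_\varphi)$.

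Combining the last two displays, and using $\nu^\infty_{1/\varphi}=\nu^0_\varphi$ so that $N_0(r,\nu^\infty_\varphi)-N_0(r,\nu^\infty_{1/\varphi})=-\big(N_0(r,\nu^0_\varphi)-N_0(r,\nu^\infty_\varphi)\big)$, I get
$$T_0(r,\varphi)-T_0\Big(r,\frac{1}{\varphi}\Big)=\big[B(1/r,\varphi)+B(r,\varphi)-2B(1,\varphi)\big]-\big[N_0(r,\nu^0_\varphi)-N_0(r,\nu^\infty_\varphi)\big]=0,$$
so the identity holds for $a=0$ up to a bounded term coming from divisor points on the reference circle $|z|=1$. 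To pass to general $a$ I would apply this to $\varphi-a$ and note that subtracting a constant leaves the pole divisor unchanged, while $\log^+|\varphi-a|\le\log^+|\varphi|+\log^+|a|+\log 2$ pointwise (and symmetrically); hence $|T_0(r,\varphi-a)-T_0(r,\varphi)|=O(1)$. Chaining $T_0(r,\varphi)=T_0(r,\varphi-a)+O(1)=T_0(r,1/(\varphi-a))+O(1)$ and absorbing each $O(1)$ into $S_\varphi(r)$ completes the argument.

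The main obstacle is the Jensen step: one must rigorously justify the piecewise-affine structure of $\phi$ and the exact size of the slope jumps (this is where $\Delta\log|\varphi|$ being a sum of point masses supported on the divisor enters), and one must treat divisor points lying exactly on the circles $|z|=1/r$, $|z|=1$, $|z|=r$, where the averages $B(s,\varphi)$ stay finite but the counting conventions need care; these boundary contributions are bounded and are swept into $S_\varphi(r)$. Alternatively, one may simply invoke the annulus Jensen formula from \cite{KK05a,KK05b,CD12}, reducing the whole proof to the two short algebraic manipulations above.
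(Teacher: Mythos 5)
Your proposal is correct, and it is essentially the proof behind the statement: the paper gives no argument of its own here (the theorem is quoted from \cite{CD12,KK05a,KK05b}), and your route --- the annulus Jensen formula, derived from the piecewise-affine dependence on $\log s$ of the circle averages of $\log|\varphi|$ with slope jumps given by the point masses of $\Delta\log|\varphi|$, followed by the elementary comparison $T_0(r,\varphi-a)=T_0(r,\varphi)+O(1)$ --- is exactly the standard argument in those references. One small sharpening: with the paper's counting conventions (points on $|z|=1$ are included in $n_0(t)$ for $t\ge 1$ and excluded for $t<1$), taking the one-sided slope $\phi'(0^-)$ as the reference value makes your two displays add up to the Jensen identity exactly, so the boundary contributions you propose to sweep into $S_\varphi(r)$ vanish and the theorem in fact holds with $O(1)$ in place of $S_\varphi(r)$, slightly stronger than stated.
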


Now we consider a holomorphic curve $f$ from an annulus $\A (R_0)$ into $\P^n(\C)$ with a reduced representation $\tilde f = (f_0,\ldots,f_n)$. The characteristic function of $f$ is defined by
$$ T_0(r,f)=\dfrac{1}{2\pi}\int\limits_{0}^{2\pi}\log \|\tilde f(re^{i\theta})\| d\theta +\dfrac{1}{2\pi}\int\limits_{0}^{2\pi}\log \|\tilde f(\frac{1}{r}e^{i\theta})\| d\theta-\dfrac{1}{\pi}\int\limits_{0}^{2\pi}\log \|\tilde f(e^{i\theta})\| d\theta.$$
Let $Q$ be a hypersurface in $\P^n(\C)$ of degree $d$ with a defining homogeneous polynomial $D$ as in the subsection 2.1
The proximity function of $f$ with respect to $Q$ is defined by
\begin{align*}
 m_0(r,f,Q)=& \dfrac{1}{2\pi}\int\limits_{0}^{2\pi}\log \dfrac{\|\tilde f(re^{i\theta})\|^d}{|D(\tilde f)(re^{i\theta})|}d\theta +\dfrac{1}{2\pi}\int\limits_{0}^{2\pi}\log \dfrac{\|\tilde f(\frac{1}{r}e^{i\theta})\|^d}{|D(\tilde f)(\frac{1}{r}e^{i\theta})|} d\theta-\dfrac{1}{\pi}\int\limits_{0}^{2\pi}\log \dfrac{\|\tilde f(e^{i\theta})\|}{|D(\tilde f)(e^{i\theta})|} d\theta.
\end{align*}
By Jensen's formula, we have the First Main Theorem as follows
$$dT_0(r,f)=m_0(r,f,Q)+N_0(r,\nu_{(Q,f)}).$$

\section{Second main theorem for holomorphic curves and arbitrary families of hypersurfaces with explicit truncation levels}

To prove our results, we first establish a second main theorem for holomorphic curves and arbitrary families of hypersurfaces with an explicit truncation level. Since the arguments for holomorphic curves on $\C$ and on annuli are analogous (the latter being slightly more involved), we only prove the second main theorem for  the case of holomorphic curves from $\A(R_0)$ and treat arbitrary families of hypersurfaces in terms of their distributive constant. We recall this notion below.

\begin{definition}[{see \cite{Q22}}]
Let $X$ be a projective subvariety of $\P^n(\C)$ of dimension $k\le n$ and $\mathcal Q=\{Q_1,\ldots,Q_q\}$ a family of hypersurfaces in $\P^n(\C)$. The distributive constant of the family $\mathcal Q$ with respect to $X$ is defined by
$$ \Delta_{\mathcal Q,X}:=\underset{\varnothing\ne\Gamma\subset\{1,\ldots,q\}}\max\frac{\sharp\Gamma}{k-\dim\left (\bigcap_{j\in\Gamma} Q_j\right )\cap X}.$$
\end{definition}
\noindent
Here and throughout this paper, for convenience we adopt the convention that
$$\dim \varnothing=-1,$$
and define
$$\mathrm{codim}_X S=\dim X-\dim S$$
for every analytic subset $S\subset X$.
 
The following lemma due to S. D. Quang and D. P. An \cite{QA} is stated for the case of meromorphic mappings on $\C^m$ but it automatically holds for the case of holomorphic curves on annuli.  
\begin{lemma}[{cf. \cite[Lemma 4]{QA}}]\label{3.2}
Let $\{Q_i\}_{i\in R}$ be a set of hypersurfaces in $\P^n(\C)$ of the common degree $d$ and let $f$ be a holomorphic curve of $\mathbb A(R_0)$ into $\P^n(\C)$. Assume that $\bigcap_{i\in R}Q_i\cap V=\varnothing$. Then there exist positive constants $\alpha$ and $\beta$ such that
$$\alpha \|\tilde f\|^d \le  \max_{i\in R}|D_i(f)|\le \beta \|\tilde f\|^d,$$
where $\tilde f$ is a reduced representation of $f$ and $D_i\ (i\in R)$ is a homogeneous polynomial defining $Q_i$ of degree $d$.
\end{lemma}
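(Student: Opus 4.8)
The plan is to reduce both inequalities to a compactness argument on the projective variety $V$, treating the two directions separately. The right-hand inequality is elementary. Each $D_i$ is a homogeneous polynomial of degree $d$ in the $n+1$ coordinates, so writing $D_i({\bf f})=\sum_{I\in\mathcal I_d}a_{i,I}f^I$ and applying the triangle inequality together with the bound $|f^I|\le\|{\bf f}\|^d$ for every $I\in\mathcal I_d$, we obtain $|D_i({\bf f})(z)|\le\beta_i\|{\bf f}(z)\|^d$ with $\beta_i$ depending only on the coefficients $a_{i,I}$. Taking $\beta=\max_{i\in R}\beta_i$ gives $\max_{i\in R}|D_i({\bf f})(z)|\le\beta\|{\bf f}(z)\|^d$.

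For the lower bound, I would consider the function $\Phi(x)=\max_{i\in R}|D_i(x)|/\|x\|^d$, defined for nonzero $x\in\C^{n+1}$ lying over a point of $V$. Since every $D_i$ is homogeneous of degree $d$, both numerator and denominator scale by $|\lambda|^d$ under $x\mapsto\lambda x$, so $\Phi$ is invariant under scaling and hence descends to a well-defined continuous function on $V$, regarded as a compact subset of $\P^n(\C)$. The hypothesis $\bigcap_{i\in R}Q_i\cap V=\varnothing$ guarantees that $\Phi$ never vanishes on $V$: if $\Phi([x])=0$ for some $[x]\in V$, then $D_i(x)=0$ for all $i\in R$, i.e. $[x]\in\bigcap_{i\in R}Q_i\cap V$, contradicting emptiness. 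Therefore $\Phi$ is a strictly positive continuous function on the compact set $V$ and attains a positive minimum $\alpha>0$.

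Finally I would specialize to $x={\bf f}(z)$. Since ${\bf f}$ is a reduced representation, ${\bf f}(z)\ne 0$ for all $z\in\A(R_0)$ and $f(z)=[{\bf f}(z)]\in V$, so $\Phi([{\bf f}(z)])\ge\alpha$ yields $\max_{i\in R}|D_i({\bf f})(z)|\ge\alpha\|{\bf f}(z)\|^d$, which is the left-hand inequality. I do not anticipate a serious obstacle: the only points requiring care are verifying that $\Phi$ genuinely descends to $\P^n(\C)$ through its degree-zero homogeneity and invoking compactness of $V$ to extract the positive minimum. The argument is entirely pointwise and uses nothing about the annular domain beyond the fact that a reduced representation has no common zeros, which is exactly why the proof for meromorphic mappings on $\C^m$ transfers verbatim to holomorphic curves on $\A(R_0)$.
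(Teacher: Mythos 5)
Your proof is correct and is essentially the argument behind the paper's citation: the paper itself gives no proof of Lemma \ref{3.2}, deferring to \cite[Lemma 4]{QA} with the remark that the statement ``automatically holds'' on annuli, and the standard proof there is exactly your pointwise argument --- triangle inequality for the upper bound, and for the lower bound the degree-zero homogeneity of $\max_{i\in R}|D_i(x)|/\|x\|^d$ plus compactness of $V$ and the hypothesis $\bigcap_{i\in R}Q_i\cap V=\varnothing$. Your closing observation (that the argument is pointwise and only uses the nonvanishing of a reduced representation, with the tacit reading that $f$ maps into $V$) is precisely why the lemma transfers verbatim from $\C^m$ to $\A(R_0)$.
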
 

We first prove the following general form of second main theorem.
\begin{theorem}\label{3.3}
Let $f$ be a linearly nondegenerate holomorphic curve from $\A(R_0)\ (1<R_0\le +\infty)$ into $\P^n(\C)$ with a reduced representation $\tilde f=(f_0,\ldots,f_n)$. Let $\{L_i\}_{i=1}^q$ be a set of linear form in $n+1$ variables. Then, we have
\begin{align*}
\frac{1}{2\pi}\int_{0}^{2\pi}\max_{K_1}\sum_{i\in K_1}\log\frac{\|\tilde f(re^{i\theta})\|}{|L_i(\tilde f(re^{i\theta}))|}d\theta&+\frac{1}{2\pi}\int_{0}^{2\pi}\max_{K_2}\sum_{i\in K_2}\log\frac{\|\tilde f(\frac{1}{r}e^{i\theta})\|}{|L_i(\tilde f(\frac{1}{r}e^{i\theta}))|}d\theta\\
&\le (n+1)T_0(r,f)-N_0(r,\nu_W)+S_f(r),
\end{align*}
where the maximums are taken over all subsets $K_t\ (t=1,2)$ of $\{1,\ldots,q\}$ so that $\{L_i|i\in K_t\}$ is linear independent, and $W$ is the wronskian of $\tilde f$, i.e., $W=\det (f_j^{(i)};0\le i,j\le n)$.
\end{theorem}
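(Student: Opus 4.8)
The plan is to reduce this annulus-version of Cartan's second main theorem to an estimate involving the Wronskian together with the logarithmic-derivative lemma (Lemma~\ref{2.1}), following the classical argument of Cartan adapted to the two integration circles $|z|=r$ and $|z|=1/r$ that appear in the annular characteristic function. First I would fix a point $z$ on one of the circles and a subset $K$ of $\{1,\ldots,q\}$ of size $n+1$ for which $\{L_i : i\in K\}$ is linearly independent; then the $(n+1)\times(n+1)$ matrix expressing $L_i(\mathbf{f})$ $(i\in K)$ in terms of $f_0,\ldots,f_n$ is invertible, so its determinant is a nonzero constant. Comparing the Wronskian $W$ of $(f_0,\ldots,f_n)$ with the Wronskian $W_K$ of the system $(L_i(\mathbf{f}))_{i\in K}$ shows they differ by exactly that constant factor, hence $|W|=c_K\,|W_K|$ pointwise. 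This is the standard device that lets one replace the coordinate functions by the chosen linear forms without changing the Wronskian divisor.

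Next I would write the key pointwise inequality. For the distinguished subset $K$ realizing the maximum at $z$, expanding $W_K$ by the generalized product rule and dividing through yields
\begin{align*}
\sum_{i\in K}\log\frac{\|\mathbf{f}\|}{|L_i(\mathbf{f})|}
\le \log\frac{\|\mathbf{f}\|^{n+1}}{|W|}
+\log\Big|\frac{W_K}{\prod_{i\in K}L_i(\mathbf{f})}\Big|+O(1),
\end{align*}
where the last term is a sum of logarithms of quotients of the form $W_K/\prod L_i(\mathbf{f})$, each of which is a polynomial in logarithmic derivatives $L_i(\mathbf{f})^{(j)}/L_i(\mathbf{f})$. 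I would then integrate this inequality over $\theta$ on both circles $re^{i\theta}$ and $\tfrac1r e^{i\theta}$, subtracting the integral on the unit circle exactly as in the definition of $T_0(r,f)$. The term $\tfrac{1}{2\pi}\int\log\|\mathbf{f}\|^{n+1}$ on the two outer circles minus the normalization on the unit circle assembles into $(n+1)T_0(r,f)$ by definition of the annular characteristic function. The term $-\log|W|$, integrated the same way, produces (via Jensen's formula on the annulus) precisely $-N_0(r,\nu_W)$ together with a proximity contribution $m_0(r,W/\text{(something)})$ that will be absorbed into $S_f(r)$.

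The main obstacle, and the step demanding the most care, is controlling the logarithmic-derivative terms $\log^+|W_K/\prod_{i\in K}L_i(\mathbf{f})|$ uniformly over the finitely many admissible subsets $K$ and showing their integrated contribution is $S_f(r)$. Here I would invoke Lemma~\ref{2.1}: each ratio $L_i(\mathbf{f})^{(j)}/L_i(\mathbf{f})$ is a logarithmic derivative of the meromorphic function $L_i(\mathbf{f})$ on $\A(R_0)$, whose proximity function is $S_{L_i(\mathbf{f})}(r)=S_f(r)$. Since there are only $\binom{q}{n+1}$ subsets $K$, taking the maximum inside the integral costs at most a bounded multiplicative factor, and the sum of finitely many $S_f(r)$ terms is again $S_f(r)$; the fact that $f$ is linearly nondegenerate guarantees $W\not\equiv 0$, so $N_0(r,\nu_W)$ is well defined. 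Finally I would record that the same estimate holds simultaneously for the two maxima over $K_1$ and $K_2$ on the respective circles, since the argument is pointwise in $\theta$ and the outer integrals are independent; adding the two integrated inequalities and collecting the error terms yields the stated bound.
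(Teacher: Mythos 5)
Your proposal is correct and takes essentially the same route as the paper's own proof: you compare $W$ with the Wronskian of $(L_i({\bf f}))_{i\in K}$ through a nonzero constant factor, invoke Lemma \ref{2.1} on both circles $|z|=r$ and $|z|=1/r$ to make the logarithmic-derivative terms $S_f(r)$, dominate the $z$-dependent maximizing subset by summing over the finitely many admissible subsets $K$, and integrate with the annular Jensen formula to produce $(n+1)T_0(r,f)-N_0(r,\nu_W)$. The only cosmetic differences are that the paper first adds auxiliary linear forms so that every maximizing subset has exactly $n+1$ elements (a normalization your pointwise argument uses implicitly), and that the control of the maximum is additive (bounding the maximum by the sum of the finitely many $S_K$) rather than a ``multiplicative factor,'' though this does not affect the conclusion since $S_f(r)$ absorbs constant multiples.
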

\begin{proof}
By adding more linear forms if necessary, we suppose that rank$\{L_i;1\le i\le q\}=n+1$. Denote by $\mathcal L$ the set of all subsets $K\subset\{1,\ldots,q\}$ such that $\sharp K=n+1$ and $\{L_i|i\in K\}$ is linear independent. Again, by adding more linear forms, we may assume that the maximums in the desired inequality of the theorem are taken over all subsets $K_t\in\mathcal L\ (t=1,2)$. Note that, for each $K\in\mathcal L$, there is a positive constant $C_{K}$ such that
$$|W|=C_{K}|\det (H_j(\tilde f)^{(i)};j\in K,0\le i\le n)|.$$
We set 
$$S_K=\log^+\left(\frac{|\det (H_j(\tilde f)^{(i)};j\in K,0\le i\le n)|}{\prod_{j\in K}|H_j(\tilde f)|}\right).$$
By Lemma \ref{2.1}, we have
\begin{align}\label{1}
\frac{1}{2\pi}\int_{0}^{2\pi}S_K(re^{i\theta})d\theta+\frac{1}{2\pi}\int_{0}^{2\pi}S_K\left(\frac{1}{r}e^{i\theta}\right)d\theta=S_f(r).
\end{align}
On the other hand, one has
$$ \max_{K_t\in\mathcal L}\sum_{i\in K_t}\log\frac{\|\tilde f(z)\|\cdot |W(z)|}{|L_i(\tilde f(z))|}\le \sum_{K\in\mathcal L}S_K+(n+1)\log\|\tilde f(z)\|+O(1)\ (t=1,2).$$
Integrating both sides of this inequalities and using Jensen's formula and (\ref{1}), we get 
\begin{align*}
\frac{1}{2\pi}\int_{0}^{2\pi}\max_{K_1}\sum_{i\in K_1}\log\frac{\|\tilde f(re^{i\theta})\|}{|L_i(\tilde f(re^{i\theta}))|}d\theta&+\frac{1}{2\pi}\int_{0}^{2\pi}\max_{K_2}\sum_{i\in K_2}\log\frac{\|\tilde f(\frac{1}{r}e^{i\theta})\|}{|L_i(\tilde f(\frac{1}{r}e^{i\theta}))|}d\theta\\
&+N_0(r,\nu_W)\le (n+1)T_0(r,f)+S_f(r).
\end{align*}
The theorem is proved.
\end{proof}

Let $X \subset \P^n(\C)$ be a projective subvariety of dimension $k$ and degree $\delta$. For $\mathbf a=(a_0,\ldots,a_n)\in \mathbb Z^{n+1}_{\ge 0}$, write $\mathbf x^{\mathbf a}=x_0^{a_0}\cdots x_n^{a_n}$. Let $I_X \subset \C[x_0,\ldots,x_n]$ be the prime ideal defining $X$, and denote by $\C[x_0,\ldots,x_n]_u$ the vector space of homogeneous polynomials of degree $u$ (including the zero polynomial). For $u\ge 1$, set $(I_X)_u := I_X \cap \C[x_0,\ldots,x_n]_u$ and define the Hilbert function by
\begin{align*}
H_X(u):=\dim \C[x_0,\ldots,x_n]_u/(I_X)_u.
\end{align*}

Let $\mathbf c=(c_0,\ldots,c_n)\in \mathbb R^{n+1}_{\ge 0}$, and let $e_X(\mathbf c)$ denote the Chow weight of $X$ with respect to $\mathbf c$. The $u$-th Hilbert weight $S_X(u,\mathbf c)$ is defined by
\begin{align*}
S_X(u,\mathbf c):=\max \sum_{i=1}^{H_X(u)} \mathbf a_i \cdot \mathbf c,
\end{align*}
where the maximum is taken over all collections of monomials $\mathbf x^{\mathbf a_1},\ldots,\mathbf x^{\mathbf a_{H_X(u)}}$ whose residue classes modulo $I_X$ form a basis of $\C[x_0,\ldots,x_n]_u/(I_X)_u$.

The following theorem is due to J. Evertse and R. Ferretti \cite{EF01}.
\begin{theorem}[{see \cite[Theorem 4.1]{EF01}}]\label{4.1}
Let $X\subset\P^n(\C)$ be an algebraic variety of dimension $k$ and degree $\delta$. Let $u>\delta$ be an integer and let ${\bf c}=(c_0,\ldots,c_n)\in\mathbb R^{n+1}_{\geqslant 0}$.
Then
$$ \dfrac{1}{uH_X(u)}S_X(u,{\bf c})\ge\dfrac{1}{(k+1)\delta}e_X({\bf c})-\dfrac{(2k+1)\delta}{u}\cdot\left (\max_{i=0,\ldots,n}c_i\right).$$
\end{theorem}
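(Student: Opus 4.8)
The plan is to express both the $u$-th Hilbert weight and the Chow weight through the action of a single one-parameter subgroup, and then to compare the two resulting numerical functions of $u$ while keeping explicit control of the lower-order terms. First I would reduce to the case ${\bf c}\in\Z^{n+1}_{\ge 0}$: each of $S_X(u,{\bf c})$, $e_X({\bf c})$ and $\max_i c_i$ is homogeneous of degree one in ${\bf c}$ and continuous (indeed piecewise linear) in ${\bf c}$, so it suffices to treat rational ${\bf c}$ and, after clearing denominators, integral ${\bf c}$. It is also convenient to normalise so that $\min_i c_i=0$: subtracting $(\min_i c_i)\cdot(1,\dots,1)$ decreases $e_X({\bf c})$ by $(\min_i c_i)(k+1)\delta$ and $S_X(u,{\bf c})$ by $(\min_i c_i)\,uH_X(u)$, hence shifts the two normalised quantities $\frac{1}{uH_X(u)}S_X(u,{\bf c})$ and $\frac{1}{(k+1)\delta}e_X({\bf c})$ by the \emph{same} amount, while not increasing $\max_i c_i$.

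For integral ${\bf c}$ I would introduce the one-parameter subgroup $\lambda(t)=\mathrm{diag}(t^{c_0},\dots,t^{c_n})$ acting on $\P^n(\C)$ and pass to the flat limit $X_0=\lim_{t\to 0}\lambda(t)\cdot X$, whose ideal is the initial ideal $\mathrm{in}_{\bf c}(I_X)$ of $I_X$ with respect to the weight ${\bf c}$. Since this initial ideal is ${\bf c}$-homogeneous, $\mathbb G_m$ acts on $\C[x_0,\dots,x_n]_u/(I_{X_0})_u$, and the greedy maximal-weight monomial basis of $\C[x_0,\dots,x_n]_u/(I_X)_u$ is precisely the family of monomials complementary to $\mathrm{in}_{\bf c}(I_X)$; consequently $S_X(u,{\bf c})$ equals the total $\mathbb G_m$-weight on the $u$-th graded piece of the coordinate ring of $X_0$. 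The classical theorem of Mumford then identifies, for large $u$, this total weight with a numerical polynomial in $u$ of degree $k+1$ whose leading coefficient is $e_X({\bf c})/(k+1)!$, while $H_X(u)$ is the Hilbert polynomial of degree $k$ with leading coefficient $\delta/k!$. Dividing by $uH_X(u)$ and letting $u\to\infty$ recovers $\frac{1}{uH_X(u)}S_X(u,{\bf c})\to\frac{1}{(k+1)\delta}e_X({\bf c})$, the asymptotic shadow of the desired inequality.

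The real content is the \emph{effective} error term, and this is where I expect the main difficulty. Instead of passing to the limit I would bound, for a fixed $u>\delta$ (so that we are past the Castelnuovo--Mumford regularity and $H_X(u)$ genuinely equals the Hilbert polynomial), the deviation of $\frac{1}{uH_X(u)}S_X(u,{\bf c})$ from $\frac{1}{(k+1)\delta}e_X({\bf c})$. The natural mechanism is induction on $k=\dim X$ via a hyperplane section $X'$: one has $H_{X'}(u)=H_X(u)-H_X(u-1)$ and a comparison of $S_{X'}(u,{\bf c})$ with the first difference $S_X(u,{\bf c})-S_X(u-1,{\bf c})$, together with the transformation of the Chow weight under a hyperplane section, the base case $k=0$ (a set of $\delta$ points) being computed directly. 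The whole difficulty is to run this comparison \emph{quantitatively and uniformly in} ${\bf c}$, isolating the factor $\max_i c_i$: a purely asymptotic argument loses all control of the constant, whereas here the accumulated discrepancy between the discrete Hilbert weight and the continuous Chow weight must be shown to be at most of order $(2k+1)\delta\,u^{k}\max_i c_i/k!$, which upon division by $uH_X(u)\sim\delta u^{k+1}/k!$ yields exactly the stated remainder $\frac{(2k+1)\delta}{u}\max_i c_i$. Guaranteeing this precise constant, rather than merely some $O(1/u)$ bound, is the crux of the argument.
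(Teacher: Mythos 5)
You should know first that the paper does not prove this statement at all: it is quoted verbatim, as an external black box, from Evertse--Ferretti \cite[Theorem 4.1]{EF01}. So the only meaningful comparison is between your sketch and what a complete proof would require, and here there is a genuine gap. Your first two paragraphs (reduction to integral ${\bf c}$, normalisation $\min_i c_i=0$, the flat degeneration $X_0=\lim_{t\to 0}\lambda(t)\cdot X$ to the initial ideal, and Mumford's identification of the leading coefficient of the weight polynomial with $e_X({\bf c})/(k+1)!$) establish, modulo standard facts, only the \emph{asymptotic} statement $\frac{1}{uH_X(u)}S_X(u,{\bf c})\to\frac{1}{(k+1)\delta}e_X({\bf c})$ as $u\to\infty$. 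But that limit is classical and is not what the theorem asserts. The entire content of the theorem is the explicit, uniform remainder $\frac{(2k+1)\delta}{u}\max_i c_i$, valid for \emph{every} $u>\delta$ and with a constant independent of ${\bf c}$ --- and this is precisely the part you label ``the crux'' and never carry out. A proof proposal that defers exactly the step carrying all the content is not a proof; it is a restatement of the difficulty. (It is also the part that matters for the paper: in the proof of Theorem \ref{1.2} the quantitative form of this error term is what allows the truncation level $L_0$ to be estimated independently of $q$.)

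Two of the concrete steps you do commit to are also shaky. First, your parenthetical claim that $u>\delta$ means ``we are past the Castelnuovo--Mumford regularity and $H_X(u)$ genuinely equals the Hilbert polynomial'' is unjustified: the regularity of an irreducible projective variety is not bounded by its degree in general (the Eisenbud--Goto bound fails, by the McCullough--Peeva examples), so your induction cannot assume that $H_X$ agrees with the Hilbert polynomial in the range $u>\delta$. Second, the proposed induction via hyperplane sections is in tension with the weight structure: to keep the statement about monomial bases and the weights $c_i$ meaningful you must cut with \emph{coordinate} hyperplanes, which are not generic for $X$, while a generic hyperplane destroys the torus-equivariant setup; your sketch controls neither the change of $S_X(u,\cdot)$ nor of $e_X(\cdot)$ under such a section, and these transformations are exactly where the factor $(2k+1)\delta$ would have to emerge. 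Evertse and Ferretti's actual argument handles this by working with the Chow form and the degenerate variety directly and making the comparison between $S_{X_0}$ and $e_{X_0}$ effective there; if you want a complete proof you should follow \cite{EF01} rather than rebuild it from the asymptotic end.
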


The following lemma is due to the first author in \cite{Q22b}.
\begin{lemma}[{see \cite[Lemma 3.2]{Q22b}}]\label{4.2}
Let $Y$ be a projective subvariety of $\P^R(\C)$ of dimension $k\ge 1$ and degree $\delta_Y$. Let $\ell\ (\ell\ge k+1)$ be an integer and let ${\bf c}=(c_0,\ldots,c_R)$ be a tuple of non-negative reals. Let $\mathcal H=\{H_0,\ldots,H_R\}$ be a set of hyperplanes in $\P^R(\C)$ defined by $H_{i}=\{y_{i}=0\}\ (0\le i\le R)$. Let $\{i_1,\ldots, i_\ell\}$ be a subset of $\{0,\ldots,R\}$ such that:
\begin{itemize}
\item[(1)] $c_{i_\ell}=\min\{c_{i_1},\ldots,c_{i_\ell}\}$,
\item[(2)] $Y\cap\bigcap_{j=1}^{\ell-1}H_{i_j}\ne \varnothing$, 
\item[(3)] and $Y\not\subset H_{i_j}$ for all $j=1,\ldots,\ell$.
\end{itemize}
Let $\Delta_{\mathcal H,Y}$ be the distributive constant of the family $\mathcal H=\{H_{i_j}\}_{j=1}^\ell$ with respect to $Y$. Then
$$e_Y({\bf c})\ge \frac{\delta_Y}{\Delta_{\mathcal H,Y}}(c_{i_1}+\cdots+c_{i_\ell}).$$
\end{lemma}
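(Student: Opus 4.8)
The plan is to derive the stated lower bound from a flag estimate for the Chow weight along the chain of coordinate sections $Y\cap H_{i_1}\cap\cdots\cap H_{i_t}$, and then to convert that estimate into the asserted bound by means of the defining property of the distributive constant. First I would normalize. Since $e_Y({\bf c})$ is a maximum over the monomials of the Chow form of linear forms in ${\bf c}$ with non-negative coefficients, it is positively homogeneous of degree one and non-decreasing in each coordinate; hence I may set $c_j=0$ for every $j\notin\{i_1,\ldots,i_\ell\}$, because this only lowers $e_Y$ while leaving the right-hand side unchanged, so a lower bound for the truncated tuple is a lower bound for ${\bf c}$. I then intersect the coordinate hyperplanes in order of decreasing weight, so that after re-indexing $c_{i_1}\ge c_{i_2}\ge\cdots\ge c_{i_\ell}\ge 0$, which is compatible with hypothesis (1); by continuity and homogeneity it suffices to argue for rational, hence integer, weights so that the classical one-parameter-subgroup description of $e_Y$ applies.

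Next I set up the flag $Y=Z_0\supseteq Z_1\supseteq\cdots\supseteq Z_\ell$ with $Z_t=Y\cap H_{i_1}\cap\cdots\cap H_{i_t}$, and write $\rho_t:=k-\dim Z_t$ for the number of dimension drops in the first $t$ steps (using $\dim\varnothing=-1$). Hypothesis (3) together with the irreducibility of $Y$ forces the first section to be proper, so $\rho_1=1$; hypothesis (2) guarantees $Z_{\ell-1}\ne\varnothing$; and each step lowers the dimension by at most one, so $\rho_t-\rho_{t-1}\in\{0,1\}$. The key analytic input is the Chow-weight flag inequality
$$e_Y({\bf c})\ge\delta_Y\sum_{t=1}^{\ell}(\dim Z_{t-1}-\dim Z_t)\,c_{i_t}=\delta_Y\sum_{t:\,\rho_t>\rho_{t-1}}c_{i_t},$$
which isolates the genuine dimension drops, each contributing $\delta_Y c_{i_t}$. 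This is a Mumford-type computation of the Chow weight: at a step where the section is proper, Bézout preserves the degree $\delta_Y$ of the top-dimensional part and the standard expansion of $e_Y$ produces the extra summand $\delta_Y c_{i_t}$, while at a step where the dimension is unchanged the corresponding coefficient vanishes. I would invoke the earlier lemma of \cite{Q22b}, equivalently the Chow-weight estimates underlying \cite{EF01}, for this inequality.

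Finally I convert. Applying the defining property of the distributive constant to the subfamily $\Gamma=\{i_1,\ldots,i_t\}$ gives $t\le\Delta_{\mathcal H,Y}\,\rho_t$ for every $t\ge 1$ (here $\rho_t\ge\rho_1=1>0$, so the denominators are positive). Since the weights are sorted in decreasing order, Abel summation with the convention $c_{i_{\ell+1}}:=0$ yields
$$\sum_{t=1}^{\ell}c_{i_t}=\sum_{s=1}^{\ell}s\,(c_{i_s}-c_{i_{s+1}})\le\Delta_{\mathcal H,Y}\sum_{s=1}^{\ell}\rho_s\,(c_{i_s}-c_{i_{s+1}})=\Delta_{\mathcal H,Y}\sum_{t=1}^{\ell}(\rho_t-\rho_{t-1})\,c_{i_t},$$
where the last equality is a second Abel summation using $\rho_0=0$, and this is exactly where hypothesis (1), that $c_{i_\ell}$ is minimal, renders the boundary term harmless. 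Since $\rho_t-\rho_{t-1}=\dim Z_{t-1}-\dim Z_t$, combining this with the flag inequality gives precisely $e_Y({\bf c})\ge\frac{\delta_Y}{\Delta_{\mathcal H,Y}}(c_{i_1}+\cdots+c_{i_\ell})$.

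The main obstacle is the flag inequality for the Chow weight: one must verify that only the true dimension drops contribute, and that each contributes the full degree $\delta_Y$ rather than the degree of some lower-degree component, while correctly handling the steps at which a hyperplane contains a component of the current section and the intermediate cycles fail to be pure-dimensional. Once that estimate is secured, the geometry enters only through the fixed dimensions $\dim Z_t$, and the passage to the distributive constant is the short Abel-summation computation above.
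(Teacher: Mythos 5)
This paper does not actually prove Lemma 4.2: it is imported verbatim from \cite[Lemma 3.2]{Q22b}, so the only proof to compare with is the one in that reference, which runs a dimension induction in which the ordering of the weights is woven into the geometric argument. Your proposal instead quarantines the ordering in the final Abel summation (that conversion, $\sum_{t}c_{i_t}\le\Delta_{\mathcal H,Y}\sum_{t:\,\rho_t>\rho_{t-1}}c_{i_t}$, is correct) and rests everything on the ``flag inequality'' $e_Y({\bf c})\ge\delta_Y\sum_{t:\,\rho_t>\rho_{t-1}}c_{i_t}$, which you assert as a ``Mumford-type computation'' and yourself identify, in your last paragraph, as the unresolved main obstacle. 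That is a genuine gap, and it cannot be closed by the justification you sketch, because that justification (proper steps contribute $\delta_Y c_{i_t}$ by B\'ezout, non-drop steps contribute nothing) never uses the ordering of the weights, whereas the flag inequality is false without it. Concretely, take $Y=\{y_0y_3-y_1y_2=0\}\subset\P^3(\C)$, so $k=2$, $\delta_Y=2$, intersect in the order $H_0,H_1,H_2,H_3$, and take ${\bf c}=(0,0,1,1)$. The flag is $Z_1=\{y_0=y_1=0\}\cup\{y_0=y_2=0\}$ (drop), $Z_2=\{y_0=y_1=0\}$ (no drop, since this line lies in $H_1$), $Z_3=\{(0:0:0:1)\}$ (drop), $Z_4=\varnothing$ (drop), so the flag inequality would give $e_Y({\bf c})\ge 2(c_0+c_2+c_3)=4$. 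But the Chow form of $Y$ is $R_Y=p_0p_3-p_1p_2$, where $p_j$ is the $j$-th cofactor of the $3\times 4$ matrix formed by the three dual vectors; every monomial of $p_j$ has weight $c_0+c_1+c_2+c_3-c_j$, hence every monomial of $R_Y$ has weight $3$, and $e_Y({\bf c})=3<4$. (With the weights sorted decreasingly, i.e.\ intersecting in the order $H_2,H_3,H_0,H_1$, the same flag pattern only claims $e_Y\ge 2$, which holds; the failure is purely one of ordering.)

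The counterexample shows why your modular architecture cannot work as described: any correct proof of the flag inequality must use $c_{i_1}\ge\cdots\ge c_{i_\ell}$ \emph{inside} the geometric induction, not only afterwards. The mechanism is visible in the only available tool, the Evertse--Ferretti cycle inequality $e_X({\bf c})\ge e_{X\cdot H_j}({\bf c})+\deg(X)\,c_j$ for proper intersections: at a non-drop step some top-dimensional component $W$ of the current cycle is absorbed by $H_{i_s}$, and from then on the degree $\deg W$ is missing from every later drop step, so the full factor $\delta_Y$ you claim there is not what B\'ezout provides; the deficit can only be repaid by the weight $c_{i_s}$ that $W$ absorbed earlier, and the repayment works precisely because $c_{i_s}\ge c_{i_t}$ for $s<t$. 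In the quadric example with sorted weights the cycle chain gives $e_Y\ge 2c_0+e_{\{y_0=y_1=0\}}+e_{\{y_0=y_2=0\}}=2c_0+c_1+c_2+2c_3$, which dominates the asserted $2(c_0+c_2+c_3)$ only because $c_1\ge c_2$. Carrying out this bookkeeping in general --- with reducible, non-pure intermediate cycles, and with the distributive constant in place of general position --- is exactly the content of the proof in \cite{Q22b} that this paper cites; neither \cite{EF01} (whose chain estimates require every step to be proper) nor any ``earlier lemma'' of \cite{Q22b} supplies your flag inequality off the shelf. As it stands, the proposal proves the easy conversion and assumes the lemma's core.
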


We now prove the following second main theorem for holomorphic curves from annuli into projective varieties and arbitrary families of hypersurfaces, which is an improvement of the recent result of L. Yang and Y.Zhu \cite{Zh}.

\begin{theorem}\label{1.1}
Let $V\subset\P^n(\C)$ be a smooth complex projective variety of dimension $k\ge 1$. Let $\mathcal Q=\{Q_1,\ldots,Q_q\}$ be a family of hypersurfaces of $\P^n(\C)$ with the distributive constant $\Delta_{\mathcal Q,V}$ with respect to $V$. Let $d_i=\deg Q_i\ (1\le i\le q)$, and $d=lcm(d_1,\ldots,d_q)$. Let $f$ be an algebraically nondegenerate holomorphic curve from $\A(R_0)\ (1<R_0\le +\infty)$ into $V$. Then, for every $\epsilon >0$, 
$$(q-\Delta_{\mathcal Q,V}(k+1)-\epsilon) T_0(r,f)\le\sum_{i=1}^q\frac{1}{d_i}N^{[L_0-1]}_0(r,\nu_{(Q_i,f)})+S_f(r),$$
where $L_0=L(d,k,\deg V,\Delta_{\mathcal Q,V},\epsilon)$.
\end{theorem}

\begin{proof} Let $D_1,\ldots,D_q$ be a homogeneous polynomial defining the hypersurfaces $Q_1,\ldots,Q_q$, respectively, where $\deg D_i=d_i\ (1\le i\le q)$. Replacing $D_i$ by $D^{d/d_i}$ if necessary, we may assume that $D_1,\ldots,D_q$ have the same degree $d$. It is suffice for us to consider the case where $q>\Delta_{\mathcal Q,V}(k+1)$. 

Denote by $\sigma_1,\ldots,\sigma_{n_0}$ all bijections from $\{0,\ldots,q-1\}$ into $\{1,\ldots,q\}$, where $n_0=q!$. For each $\sigma_i$, it is easy to see that $\bigcap_{j=0}^{q-2}Q_{\sigma_i(j)}\cap V=\varnothing$. Then there exists a smallest index $\ell_i\le q-2$ such that $\bigcap_{j=0}^{\ell_i}Q_{\sigma_i(j)}\cap V=\varnothing$. Hence $\bigcap_{j=0}^{\ell_i}Q_{\sigma_i(j)}\cap V=\varnothing$ for all $i=1,\ldots,n_0$.

By Lemma \ref{3.2}, there is a positive constant $A$, independent of $\sigma_i$, such that
$$ \|\tilde f (z)\|^d\le A\max_{0\le j\le \ell_i}|D_{\sigma_i(j)}(\tilde f)(z)|\ \forall i=1,\ldots,n_0.$$
Denote by $S(i)$ the set of all $z$ such that $D_j(\tilde f)(z)\ne 0$ for all $j=1,\ldots,q$ and
$$|D_{\sigma_i(0)}(\tilde f)(z)|\le |D_{\sigma_i(1)}(\tilde f)(z)|\le\cdots\le |D_{\sigma_i(q-1)}(\tilde f)(z)|.$$
Therefore, for every $z\in S(i)$, we have
$$\prod_{j=1}^q\dfrac{\|\tilde f (z)\|^d}{|D_j(\tilde f)(z)|}\le  C\prod_{j=0}^{\ell_j}\dfrac{\|\tilde f (z)\|^d}{|D_{\sigma_i(j)}(\tilde f)(z)|},$$
where $C$ is a positive constant (independent of $\sigma_i$).

Consider the mapping $\Phi$ from $V$ into $\P^{q-1}(\C)$ which maps the point ${\bf x}=(x_0:\cdots:x_n)\in V$ into the point
$$\Phi({\bf x})=(D_1(x):\cdots : D_{q}(x))\in\P^{q-1}(\C),$$
where $x=(x_0,\ldots,x_n)$. We set
$$\tilde\Phi(x)=(D_1(x),\ldots ,D_{q}(x)).$$
Let $Y=\Phi(V)$. Since $V\cap\bigcap_{j=1}^{q}Q_j=\varnothing$, $\Phi$ is a finite morphism on $V$ and $Y$ is a projective subvariety of $\P^{q-1}(\C)$ with $\dim Y=k$ and of degree
$$\delta:=\deg Y\le d^{k}.\deg V.$$ 
For every ${\bf a} = (a_1,\ldots,a_q)\in\mathbb Z^q_{\ge 0}$ and ${\bf y} = (y_1,\ldots,y_q)$ we denote ${\bf y}^{\bf a} = y_{1}^{a_{1}}\ldots y_{q}^{a_{q}}$. Let $u$ be a positive integer. We set $\xi_u:=\binom{q+u}{u}$ and define the $\C$-vector space
$$ Y_{u}:=\C[y_1,\ldots,y_q]_u/(I_{Y})_u \text{ and }n_u:=\dim Y_u-1.$$
We fix a basis $\{v_0,\ldots, v_{n_u}\}$ of $Y_u$ and consider the holomorphic map $F:\A(R_0)\rightarrow\P^{n_u}(\C)$ which has a reduced representation
$$ \tilde F=(v_0(\tilde\Phi\circ \tilde f),\ldots ,v_{n_u}(\tilde\Phi\circ \tilde f)):\A(R_0)\rightarrow \C^{n_u+1}. $$
Hence $F$ is linearly nondegenerate, since $f$ is algebraically nondegenerate.

Now, we fix a point $z\not\in\bigcup_{j=1}^q(D_j(\tilde f))^{-1}(0)$. Suppose that $z\in S(i_0)$. We define
$${\bf c}_z = (c_{1,z},\ldots,c_{q,z})\in\mathbb R^{q},$$
where
\begin{align}\label{3}
c_{j,z}:=\log\frac{\|\tilde f(z)\|^d}{|D_j(\tilde f(z))|}\text{ for } j=1, \ldots ,q.
\end{align}
We see that $c_{j,z}\ge 0$ for all $j$. By the definition of the Hilbert weight, there are ${\bf a}_{0,z}, \ldots ,{\bf a}_{n_u,z}\in\mathbb Z^{q}_{\ge 0}$ with
$$ {\bf a}_{i,z}=(a_{i1,z},\ldots, a_{iq,z})\text{ with }a_{is,z}\in\{0, \ldots ,u\}, $$
 such that the residue classes modulo $I(Y)_u$ of ${\bf y}^{{\bf a}_{0,z}}, \ldots ,{\bf y}^{{\bf a}_{n_u,z}}$ form a basis of $I_u(Y)$ and
\begin{align}\label{4}
S_Y(u,{\bf c}_z)=\sum_{i=0}^{n_u}{\bf a}_{i,z}\cdot{\bf c}_z.
\end{align}
We see that ${\bf y}^{{\bf a}_{i,z}}\in Y_u$ (modulo $I(Y)_u$). Then we may rewrite
$$ {\bf y}^{{\bf a}_{i,z}}=L_{i,z}(v_0,\ldots ,v_{n_u}), $$
where $L_{i,z}\ (0\le i\le n_u)$ are linearly independent linear forms with coefficients in $\C$. We have
\begin{align*}
\log\prod_{i=0}^{n_u} |L_{i,z}(\tilde F(z))|&=\log\prod_{i=0}^{n_u}\prod_{1\le j\le q}|D_j(\tilde f(z))|^{a_{ij,z}}\\
&=-S_Y(m,{\bf c}_z)+du(n_u+1)\log \|\tilde f(z)\| +O(u(n_u+1)).
\end{align*}
This implies that
\begin{align*}
\log\prod_{i=0}^{n_u}\dfrac{\|\tilde F(z)\|\cdot \|L_{i,z}\|}{|L_{i,z}(\tilde F(z))|}=&S_Y(u,{\bf c}_z)-du(n_u+1)\log \|\tilde f(z)\| \\
&+(n_u+1)\log \|\tilde F(z)\|+O(u(n_u+1)).
\end{align*}
Here we note that $L_{i,z}$ depends on $i$, $z$ and $u$, but the number of these linear forms is finite. We denote by $\mathcal L$ the set of all $L_{i,z}$ occurring in the above inequalities. Then,
\begin{align}\label{5}
\begin{split}
S_Y(u,{\bf c}_z)\le&\max_{\mathcal J\subset\mathcal L}\log\prod_{L\in \mathcal J}\dfrac{\|\tilde F(z)\|\cdot \|L\|}{|L(\tilde F(z))|}+du(n_u+1)\log \|\tilde f(z)\|\\
& -(n_u+1)\log \|\tilde F(z)\|+O(u(n_u+1)),
\end{split}
\end{align}
where the maximum is taken over all subsets $\mathcal J\subset\mathcal L$ with $\sharp\mathcal J=n_u+1$ and $\{L;L\in\mathcal J\}$ is linearly independent.
From Theorem \ref{4.1}, we have
\begin{align}\label{6}
\dfrac{1}{u(n_u+1)}S_Y(u,{\bf c}_z)\ge&\frac{1}{(k+1)\delta}e_Y({\bf c}_z)-\frac{(2k+1)\delta}{u}\max_{1\le j\le q}c_{j,z}.
\end{align}
One remarks that
\begin{align*}
\max_{1\le j\le q}c_{j,z}\le \sum_{1\le j\le q}\log\frac{\|\tilde f(z)\|^d}{|D_j(\tilde f(z))|}.
\end{align*}
Combining (\ref{5}), (\ref{6}) and the above remark, we get
\begin{align}\label{7}
\begin{split}
\frac{1}{(k+1)\delta}e_Y({\bf c}_z)\le &\dfrac{1}{u(n_u+1)}\left (\max_{\mathcal J\subset\mathcal L}\log\prod_{L\in \mathcal J}\dfrac{\|\tilde F(z)\|\cdot \|L\|}{|L(\tilde F(z))|}-(n_u+1)\log \|\tilde F(z)\|\right )\\
&+d\log \|\tilde f(z)\|+\frac{(2k+1)\delta}{u}\max_{1\le j\le q}c_{j,z}+O(1)\\
\le &\dfrac{1}{u(n_u+1)}\left (\max_{\mathcal J\subset\mathcal L}\log\prod_{L\in\mathcal J}\dfrac{\|\tilde F(z)\|\cdot \|L\|}{|L(\tilde F(z))|}-(n_u+1)\log \|\tilde F(z)\|\right )\\
&+d\log \|\tilde f(z)\|+\frac{(2k+1)\delta}{u}\sum_{1\le j\le q}\log\frac{\|\tilde f(z)\|^d}{|D_j(\tilde f(z))|}+O(1).
\end{split}
\end{align}
We have $V\cap\bigcap_{j=0}^{\ell_{i_0}}Q_{\sigma_{i_0}(j)}=\varnothing$. Then by Theorem \ref{4.2} and (\ref{7}), we have
\begin{align}\label{8}
\begin{split}
e_Y({\bf c}_z)&\ge \frac{\delta}{\Delta_{\mathcal Q,V}}\cdot(c_{\sigma_{i_0}(0),z}+\cdots +c_{\sigma_{i_0}(l_{i_0}),z})=\frac{\delta}{\Delta_{\mathcal Q,V}}\cdot\log\prod_{j=0}^{l_{i_0}}\frac{\|\tilde f(z)\|^d}{|Q_{\sigma_{i_0}(j)}(\tilde f(z))|}\\
&\ge\frac{\delta}{\Delta_{\mathcal Q,V}}\cdot\log\prod_{j=1}^{q}\frac{\|\tilde f(z)\|^d}{|D_j(\tilde f(z))|}+O(1).
\end{split}
\end{align}
Then, from (\ref{7}) and (\ref{8}) we have
\begin{align}\label{9}
\begin{split}
\frac{1}{\Delta_{\mathcal Q,V}(k+1)}&\cdot\log\prod_{j=1}^{q}\frac{\|\tilde f(z)\|^d}{|D_j(\tilde f(z))|}\\
&\le\dfrac{1}{u(n_u+1)}\left (\max_{\mathcal J\subset\mathcal L}\log\prod_{L\in\mathcal J}\dfrac{\|\tilde F(z)\|\cdot \|L\|}{|L(\tilde F(z))|}-(n_u+1)\log \|\tilde F(z)\|\right )\\
&+d\log \|\tilde f(z)\|+\frac{(2k+1)\delta}{u}\sum_{1\le j\le q}\log\frac{\|\tilde f(z)\|^d}{|D_j(\tilde f(z))|}+O(1),
\end{split}
\end{align}
where the term $O(1)$ does not depend on $z$.

By applying Theorem \ref{3.3} to the linear nondegenerate holomorphic curve $F$ and the system of linear forms $\mathcal L$, we get:
\begin{align}\label{10}
\frac{1}{2\pi}\int_0^{2\pi}\max_{\mathcal J\subset\mathcal L} \log\prod_{L\in\mathcal J}\left(\frac{\|\tilde f\|}{|L(\tilde f)|}(re^{i\theta})\right)d\theta
 \leq (n_u+1)T_0(r,F)-N_0(r,\nu_{W(\tilde F)})+S_f(r),
\end{align}
where the maximum is taken over all subsets $\mathcal J$ of $\mathcal L$, such that $\sharp J=n_u+1$ and $\{L|L\in\mathcal J\}$ is linearly independent.

Integrating both sides of (\ref{9}), in the view of (\ref{10}), we obtain
\begin{align}\label{11}
\begin{split}
\frac{1}{\Delta_{\mathcal Q,V}}&\left(dqT_0(r,f)-\sum_{i=1}^qN_{Q_i(f)}(r)\right) \le d(k+1)T_0(r,f)-\frac{(k+1)}{u(n_u+1)}N_0(r,\nu_{W(\tilde F)})\\
&+\frac{(2k+1)(k+1)\delta}{u}\sum_{i=1}^qm_0(r,f,Q_i)+S_f(r).
\end{split}
\end{align}

We now estimate $N_0(r,\nu_{W(\tilde F)})$. Fix a point $z\in\C$. We set $c_{i}=\max\{0,\nu_{(Q_i,f)}(z)-n_u\}$ for $i=1,\ldots,q$, and
$${\bf c}=(c_{1},\ldots,c_{q})\in\mathbb Z^q_{\ge 0}.$$
Then, for $i=0,\ldots,n_u$, there are
$${\bf a}_i=(a_{i1},\ldots,a_{iq}),a_{is}\in\{1, \ldots ,u\}$$
such that ${\bf y}^{{\bf a}_0}, \ldots ,{\bf y}^{{\bf a}_{n_u}}$ is a basic of $I_u(Y)$ and
$$ S_{Y_z}(u,{\bf c})=\sum_{i=0}^{n_u}{\bf a}_i\cdot {\bf c}.$$
Similarly as above, ${\bf y}^{{\bf a}_i}=L_i(v_0,\ldots,v_{n_u})$, where $L_0,\ldots,L_{n_u}$ are linearly independent linear forms with coefficients in $\C$. By the property of the Wronskian, we have
$$W(\tilde F)=c\det\bigl (L_0(\tilde\Phi(\tilde f))^{(w)}),\ldots,L_{n_u}(\tilde\Phi(\tilde f))^{(w)}\bigl )_{0\leq w\leq n_u},$$
where $c$ is a non-zero constant. This yields that
$$ \nu_{W(\tilde F)}(z)\ge \sum_{i=0}^{n_u}\min_{0\le w\le n_u}\nu_{L_i(\tilde f)^{(w)}}(z).$$
We easily see that
$$\nu_{L_i(\tilde f)^{(w)}}(z)=\sum_{j=1}^qa_{ij}\nu_{D_j(\tilde f)^{(w)}}(z)\ge\sum_{j=1}^qa_{ij}\max\{0,\nu_{D_j(\tilde f)}(z)-n_u\}.$$
Thus, we have
\begin{align}\label{12}
 \nu_{W(\tilde F)}(z)\ge \sum_{i=0}^{n_u}{{\bf a}_i}\cdot{\bf c}=S_Y(u,{\bf c}).
\end{align}
Take an index $i_0$ such that $c_{\sigma_{i_0}(0)}\ge c_{\sigma_{i_0}(1)}\ge\cdots\ge c_{\sigma_{i_0}(q-1)}$. Hence, $c_{\sigma_{i_0}(j)}=0$ for all $j\ge l_{i_0}$. Then by Lemma \ref{4.2} we have
$$\Delta_{\mathcal Q,V}e_{Y}({\bf c})\ge\delta(c_{\sigma_{i_0}(0)}+\cdots +c_{\sigma_{i_0}(l_{i_0})})=\delta\sum_{j=1}^{q}\max\{0,\nu_{D_j(\tilde f)}(z)-n_u\}.$$
On the other hand, by Theorem \ref{4.1} we have that
\begin{align*}
\frac{1}{u(n_u+1)}S_{Y}(u,{\bf c}) &\ge\frac{1}{(k+1)\delta}e_Y({\bf c})-\frac{(2k+1)\delta}{u}\max_{1\le i\le q}c_{i}\\
&\ge \left(\frac{1}{\Delta_{\mathcal Q,V}(k+1)}-\frac{(2k+1)\delta}{u}\right)\sum_{j=1}^{q}\max\{0,\nu_{D_j(\tilde f)}(z)-n_u\}.
\end{align*}
Combining this inequality and (\ref{12}), we have
\begin{align*}
\dfrac{(k+1)}{u(n_u+1)}&\nu^0_{W(\tilde F)}(z)\ge\dfrac{(k+1)}{u(n_u+1)}S_Y(u,{\bf c})\\
&\ge\left(\frac{1}{\Delta_{\mathcal Q,V}}-\frac{(2k+1)(k+1)\delta}{u}\right)\sum_{j=1}^{q}\max\{0,\nu_{D_j(\tilde f)}(z)-n_u\}.
\end{align*}
Integrating both sides of this inequality, we obtain
$$\dfrac{(k+1)}{u(n_u+1)}N_0(r,\nu_{W(\tilde F)})\ge\left(\frac{1}{\Delta_{\mathcal Q,V}}-\frac{(2k+1)(k+1)\delta}{u}\right)\sum_{j=1}^{q}(N_0(r,\nu_{(Q_j,f)})-N_0^{[n_u]}(r,\nu_{(Q_j,f)})).$$
Combining (\ref{11}) and the above inequality and the first main theorem, we get
\begin{align*}
\frac{1}{\Delta_{\mathcal Q,V}}&\left(dqT_0(r,f)-\sum_{i=1}^qN_0(r,\nu_{(Q_i,f)})\right)\le d(k+1)T_0(r,f)\\
&-\left(\frac{1}{\Delta_{\mathcal Q,V}}-\frac{(2k+1)(k+1)\delta}{u}\right)\sum_{j=1}^{q}(N_0(r,\nu_{(Q_j,f)})-N_0^{[n_u]}(r,\nu_{(Q_j,f)}))\\
&+\frac{(2k+1)(k+1)\delta}{u}\sum_{i=1}^q(dT_0(r,f)-N_0(r,\nu_{(Q_i,f)}))+S_f(r).
\end{align*}
By setting $m_0=\frac{1}{\Delta_{\mathcal Q,V}}-\frac{(2k+1)(k+1)\delta}{u}$, the above inequality implies that
\begin{align*}
\left(q-\frac{k+1}{m_0}\right)T_0(r,f)\le\sum_{j=1}^{q}\dfrac{1}{d}N^{[n_u]}_0(r,\nu_{(Q_j,f)})+S_f(r).
\end{align*}
We choose $u=L(d,k,\deg V, \Delta_{\mathcal Q,V},\epsilon)= \lceil \Delta_{\mathcal Q,V}(2k+1)(k+1)d^k\deg V((k+1)\Delta_{\mathcal Q,V}+\epsilon)\epsilon^{-1}\rceil$. Then we have
$$ u\ge \Delta_{\mathcal Q,V}(2k+1)(k+1)\delta((k+1)\Delta_{\mathcal Q,V}\epsilon^{-1}+1) $$
and hence:
\begin{align*}
\frac{k+1}{m_0}&\le (k+1)\Delta_{\mathcal Q,V}+\epsilon;\\
n_u&\le H_{Y}(u)-1\le d^k\deg V\binom{u+k}{k}-1.
\end{align*}
Note that, we may suppose that $\epsilon<q-\Delta_{\mathcal Q,V}(k+1)$. Hence, if $k=1$ then
\begin{align*}
n_u+1&<d^n\deg V(1+u)\\
&<d^k\deg V(\Delta_{\mathcal Q,V}(2k+1)(k+1)d^k\deg V((k+1)\Delta_{\mathcal Q,V}\epsilon^{-1}+1)+2)\\
&\le d^{k^2+k}(\deg V)^{k+1}e^k\Delta_{\mathcal Q,V}^k(2k+5)^k((k+1)\Delta_{\mathcal Q,V}\epsilon^{-1}+1)^k.
\end{align*}
Otherwise, if $k\ge 2$, we have
\begin{align*}
n_u+1&<d^k\deg Ve^k\left(1+\frac{u}{k}\right)^k\\
&\le d^k\deg Ve^k\left(1+\frac{\Delta_{\mathcal Q,V}(2k+1)(k+1)d^k\deg V((k+1)\Delta_{\mathcal Q,V}\epsilon^{-1}+1)+1}{k}\right)^k\\
&\le d^{k^2+k}(\deg V)^{k+1}e^k\Delta_{\mathcal Q,V}^k(2k+5)^k((k+1)\Delta_{\mathcal Q,V}\epsilon^{-1}+1)^k.
\end{align*}
Therefore, we always have
$$n_u\le\left\lfloor d^{k^2+k}(\deg V)^{k+1}e^k\Delta_{\mathcal Q,V}^k(2k+5)^k((k+1)\Delta_{\mathcal Q,V}\epsilon^{-1}+1)^n\right\rfloor-1=L_0-1.$$
Then, we get
\begin{align*}
(q-\Delta_{\mathcal Q,V}(n+1)-\epsilon)T_0(r,f)\le\sum_{j=1}^{q}\frac{1}{d}N^{[L_0-1]}_0(r,\nu_{(Q_j,f)})+S_f(r).
\end{align*}
This completes the proof of the theorem.
\end{proof}

\section{Estimate the distributive constants of families of hypersurfaces in subgeneral position}

Firstly, we have the following lemma due to G. Dethloff and S. D. Quang \cite{Q26}
\begin{lemma}[{cf. \cite[Lemma 3.2]{Q26}}]\label{4.1} 
Let $V$ be an $k$-dimension subvariety of $\P^n(\C)$. Let $\mathcal Q=\{Q_1,\ldots,Q_q\}$ be a family of $q$ hypersurfaces of $\P^n(\C)$ in $N$-subgeneral position with respect to $V$ and satisfying the weak B\'ezout property.
Then there is a subset $\Gamma$ of $\{1,\ldots,q\}$ with $\sharp\Gamma\ge q-N+k-3+\lceil \frac{N-k+3}{2}\rceil$ such that the family of hypersurfaces $\mathcal P=\{Q_i;i\in\Gamma\}$ satisfies $\Delta_{\mathcal P,V}\le\frac{N-k+2}{2}$.
\end{lemma}

Motivated by the idea of using Nochka diagram of G. Heier and A. Levin in \cite{HL}, we now prove the main lemma of this paper.

\begin{lemma}\label{4.2}
Let $V\subset\P^n(\C)$ be an $k$-dimensional complex projective variety. Let $\mathcal{Q} = \{Q_1, \ldots, Q_q\}$ be a family of hypersurfaces in $\P^n(\C)$ located in $N$-subgeneral position with respect to $V\ (N> k)$ and satisfying the B\'ezout property. Then there exists a subset $\Gamma \subset \{1,\ldots,q\}$ such that $\sharp\Gamma\ge q -\left\lceil\frac{(N-k)\tau}{\tau-1}\right\rceil +1$ and the family $\mathcal P= \{Q_i : i \in \Gamma\}$ satisfies $\Delta_{\mathcal P,V}\le \tau$, where 
$$ \tau=\begin{cases}
1 + \sqrt{\frac{N-k}{k+1}}&\text{ if }\ k<N< \frac{5k+1}{4},\\
1+\frac{2(N-k)}{k+1}&\text{ if }\ \frac{5k+1}{4}\le N.
\end{cases} $$
\end{lemma}

\begin{proof} 
For any subset $\Gamma \subset \{1,\ldots,q\}$, denote
$$Q_\Gamma = \bigcap_{i \in \Gamma}Q_i \cap V, \quad c(\Gamma) = \operatorname{codim}_V Q_\Gamma.$$
Note that
$$\Delta_{\mathcal Q,V}= \max_{\Gamma \ne \emptyset} \frac{\sharp\Gamma}{c(\Gamma)}.$$
If $\Delta_{\mathcal Q,V}\le \tau$, then the conclusion holds with $\Gamma = \{1,\ldots,q\}$.

Assume $\Delta_{\mathcal Q,V}> \tau$. Let
$$\mathcal{S} = \left\{ \Gamma \subset \{1,\ldots,q\} : \frac{\sharp\Gamma}{c(\Gamma)} > \tau \right\}.$$
Then $\mathcal{S}$ is nonempty. Choose $\Gamma_1 \in \mathcal{S}$ such that $\sharp\Gamma_1$ is maximal. Since $\sharp\Gamma_1\le (N-k)+c(\Gamma_1),$ we have
$$ \tau<\frac{\sharp\Gamma_1}{c(\Gamma_1)}\le \frac{(N-k)+c(\Gamma_1)}{c(\Gamma_1)}.$$
This implies that
$$c(\Gamma_1)<\frac{N-k}{\tau-1}=\begin{cases}
\sqrt{(N-k)(k+1)}&\text{ if }\ k<N< \frac{5k+1}{4},\\
\frac{k+1}{2}&\text{ if }\ \frac{5k+1}{4}\le N.
\end{cases} $$ 
Hence $c(\Gamma_1)<\frac{k+1}{2}$ and then $c(\Gamma_1)\le \left\lfloor\frac{k}{2}\right\rfloor$.

\medskip
\noindent \textbf{Claim.} For any $\Gamma_2 \in \mathcal{S}\setminus\{\Gamma_1\}$, we have $\Gamma_1 \cap \Gamma_2 \ne \emptyset$.
\medskip

\noindent \textit{Proof of Claim.}
Assume $\Gamma_1 \cap \Gamma_2 = \emptyset$. Similar as above, one has $c(\Gamma_2)<\frac{k+1}{2}$.  Using the B\'ezout property, we have
$$c(\Gamma_1 \cup \Gamma_2) \le c(\Gamma_1) + c(\Gamma_2)<k+1.$$
This yields that $\bigcap_{j\in \Gamma_1 \cup \Gamma_2}(Q_j\cap V)\ne\varnothing$.

Also, from
$$\frac{\sharp\Gamma_1}{c(\Gamma_1)} > \tau \quad\text{ and }\quad \frac{\sharp\Gamma_2}{c(\Gamma_2)} > \tau,$$
by using the B\'ezout property we obtain
$$\frac{\sharp\Gamma_1 + \sharp\Gamma_2}{c(\Gamma_1 \cup \Gamma_2)}\ge \frac{\sharp\Gamma_1+\sharp\Gamma_2}{c(\Gamma_1) + c(\Gamma_2)} > \tau.$$
Thus $\Gamma_1 \cup \Gamma_2 \in \mathcal{S}$, contradicting the maximality of $\sharp\Gamma^*$. 
The claim is proved.

\medskip
Set $t = \frac{\sharp\Gamma_1}{c(\Gamma_1)} > \tau > 1.$ Since $\mathcal{Q}$ is in $N$-subgeneral position with respect to $V$, we have
$$c(\Gamma_1) \ge\sharp\Gamma_1-(N-k).$$
Hence
$$\sharp\Gamma_1\le \frac{(N-k)t}{t-1}<\frac{(N-k)\tau}{\tau-1},$$
This implies that
$$\sharp\Gamma_1\le\left\lceil\frac{(N-k)\tau}{\tau-1}\right\rceil -1.$$

%

Finally, suppose there exists $\Gamma' \subset \Gamma$ such that
$$
\frac{|\Gamma'|}{c(\Gamma')} > \tau.
$$
Then $\Gamma' \in \mathcal{S}$ but $\Gamma' \cap \Gamma_1 = \emptyset$, contradicting the claim. 
Hence
$$\Delta_{\mathcal P,V}\le \tau.$$
This completes the proof.
\end{proof}

\section{Proofs of the second main theorems in various cases}

\noindent
\textbf{5.1. Holomorphic curves from $\C$ and annuli into projective varieties}

Repeating the proof of Theorem \ref{1.1} in the case of holomorphic curves on $\C$, we obtain the following theorem.
\begin{theorem}\label{1.2}
Let $V\subset\P^n(\C)$ be a smooth complex projective variety of dimension $k\ge 1$. Let $\mathcal Q=\{Q_1,\ldots,Q_q\}$ be a family of hypersurfaces of $\P^n(\C)$ with the distributive constant $\Delta_{\mathcal Q,V}$ with respect to $V$. Let $d_i=\deg Q_i\ (1\le i\le q)$ and $d=lcm(d_1,\ldots,d_q)$. Let $f$ be an algebraically nondegenerate holomorphic curve from $\C$ into $V$. Then, for every $\epsilon >0$, 
$$\biggl \|\ (q-\Delta_{\mathcal Q,V}(k+1)-\epsilon) T_f(r)\le\sum_{i=1}^q\frac{1}{d_i}N^{[L_0-1]}(r,\nu_{(Q_i,f)})+o(T_f(r)).$$
where $L_0=L(d,k,\deg V,\Delta_{\mathcal Q,V},\epsilon)$.
\end{theorem}

\begin{proof}[Proof of Theorem \ref{1.1new}]
By Lemma \ref{4.2},  there is a subset $\Gamma$ of $\{1,\ldots,q\}$ with $\sharp\Gamma\ge q -\left\lceil\frac{(N-k)\tau}{\tau-1}\right\rceil +1$ such that the family of hypersurfaces $\mathcal P=\{Q_i;i\in\Gamma\}$ satisfies $\Delta_{\mathcal P,V}\le \tau$. Applying Theorem \ref{1.2} for $f$ and the family $\mathcal P$, we obtain
$$\biggl \|\ (q -\left\lceil\frac{(N-k)\tau}{\tau-1}\right\rceil +1-\Delta_{\mathcal P,V}(k+1)-\epsilon)T_f(r)\le\sum_{i\in\Gamma_1}\frac{1}{d_i}N^{[L_1']}(r,\nu_{(Q_i,f)})+o(T_f(r)),$$
where $L_1'=L(d,k,\deg V,\Delta_{\mathcal P,V},\epsilon)$. Since $\Delta_{\mathcal P,V}\le \tau$, we have $L_1'\le L_1$. Then, the above inequality implies that
$$\biggl \|\ \left (q -\left\lceil\frac{(N-k)\tau}{\tau-1}\right\rceil +1-\tau(k+1)-\epsilon\right)T_0(r,f)\le\sum_{i=1}^q\frac{1}{d_i}N^{[L_1]}(r,\nu_{(Q_i,f)})+o(T_f(r)).$$
The theorem is proved.
\end{proof}

\begin{proof}[Proof of Theorem \ref{1.2new}]

(a)  By Lemma \ref{4.1},  there is a subset $\Gamma_0$ of $\{1,\ldots,q\}$ with $\sharp\Gamma_0\ge q-\left\lfloor \frac{N-k+3}{2}\right\rfloor$ such that the family of hypersurfaces $\mathcal P_0=\{Q_i;i\in\Gamma_0\}$ satisfies $\Delta_{\mathcal P_0,V}\le\frac{N-k+2}{2}=\tau_0$. Applying Theorem \ref{1.1} for $f$ and the family $\mathcal P_0$, we obtain
$$\biggl \|\ \left (q-\left\lfloor \frac{N-k+3}{2}\right\rfloor-\Delta_{\mathcal P_0,V}(k+1)-\epsilon\right)T_0(r,f)\le\sum_{i\in\Gamma_0}\frac{1}{\deg Q_i}N^{[L_0']}_0(r,\nu_{(Q_i,f)})+S_f(r),$$
where $L_0'=L(d,k,\deg V,\Delta_{\mathcal P_0,V},\epsilon)$. Since $\Delta_{\mathcal P_0,V}\le \tau_0$, we have $L_0'\le L_0$. Then, the above inequality implies that
$$\biggl \|\ \left (q-\left\lfloor\tau_0+\frac{1}{2}\right\rfloor- \tau_0(k+1)-\epsilon\right)T_0(r,f)\le\sum_{i=1}^q\frac{1}{d_i}N^{[L_0]}(r,\nu_{(Q_i,f)})+S_f(r).$$
We get the desired inequality.

(b) Repeating the argument in the proof of (a), with Lemma \ref{4.2} in place of Lemma \ref{4.1}, yields the desired inequality. We omit the details of the proof.
\end{proof}

\noindent
\textbf{5.2. Holomorphic curves from complex discs into projective varieties.}

We have the following theorem due to the first author \cite{Q25}.

\vskip0.2cm
\noindent
\textbf{Theorem G} (reformulation, \cite{Q25}) {\it  Let $f$ be a algebraically nondegenerate holomorphic map of $\Delta (R)$ into an $k$-dimension smooth projective subvariety $V\subset\P^n(\mathbf{C})$ with finite growth index $c_f$. Let $\mathcal Q=\{Q_i\}_{i=1}^q$ be a family of hypersurfaces in $\P^n(\mathbf{C}$ with $\deg Q_i=d_i\ (1\le i\le q)$, and let $d=lcm(d_1,\ldots,d_q)$. Then for any $\epsilon>0$ and $\epsilon'>0$ we have 
\begin{align*}
\bigl\|\ &(q-\Delta_{\mathcal Q,V}(k+1)-\epsilon)T_f(r)\\
&\le\sum_{j=1}^{q}\frac{1}{d_j}N^{[L-1]}_{Q_j(f)}(r)+\frac{(\Delta_{\mathcal Q,V}(k+1)+\epsilon)(c_f+\epsilon')(L_0-1)}{2du_0}T_f (r),
\end{align*}
where $L_0=L(d,k,\deg V,\Delta_{\mathcal Q,V},\epsilon)$ and $u_0=u(d,k,\deg V,\Delta_{\mathcal Q,V},\epsilon)$.}

\begin{proof}[Proof of Theorem \ref{1.3new}]
We follow the argument in the proof of Theorem \ref{1.2new}, replacing Theorem \ref{1.1} by Theorem G.

(a)  Take the subset $\Gamma_0$ of $\{1,\ldots,q\}$ and the subfamily $\mathcal P_0=\{Q_i;i\in\Gamma_0\}$ of $\mathcal Q$ as the conslusion of  Lemma \ref{4.1}. Applying Theorem F for $f$ and the family $\mathcal P_0$, we obtain
\begin{align*}
\bigl\|\ &\left(q-\left\lfloor \frac{N-k+3}{2}\right\rfloor-\Delta_{\mathcal P_0,V}(k+1)-\epsilon\right)T_f(r)\\
&\le\sum_{j\in\Gamma_0}\frac{1}{d_j}N^{[L_0'-1]}_{Q_j(f)}(r)+\frac{(\Delta_{\mathcal P_0,V}(k+1)+\epsilon)(c_f+\epsilon')(L_0'-1)}{2du_0'}T_f (r),
\end{align*}
where $L_0'=L(d,k,\deg V,\Delta_{\mathcal P_0,V},\epsilon)$ and $u_0'=u(d,k,\deg V,\Delta_{\mathcal P_0,V},\epsilon)$. Since $\Delta_{\mathcal P_0,V}\le \tau_0$, we have $L_0'\le L_0$ and $\frac{L_0'-1}{u_0'}\le \frac{L_0-1}{u_0}$. Then, the above inequality implies that
\begin{align*}
\bigl\|\ &\left(q-\left\lfloor \frac{N-k+3}{2}\right\rfloor-\Delta_{\mathcal P_0,V}(k+1)-\epsilon\right)T_f(r)\\
&\le\sum_{j\in\Gamma_0}\frac{1}{d_j}N^{[L_0-1]}_{Q_j(f)}(r)+\frac{(\Delta_{\mathcal P_0,V}(k+1)+\epsilon)(c_f+\epsilon')(L_0-1)}{2du_0}T_f (r),
\end{align*}
We get the desired inequality.

(b) Repeating the argument in the proof of (a), with Lemma \ref{4.2} in place of Lemma \ref{4.1}, we will get the assertion (b).
\end{proof}

\noindent
\textbf{5.3. Holomorphic curves from K\"{a}hler manifolds into projective varieties.}

We have the following theorem due to T. D. Ngoc and S. D. Quang \cite{NQ} with minor correction.

\begin{theorem}[{correction, \cite[Theorem 1.1]{NQ}}] \label{TheoremG}
Let $M$ be an $m$-dimensional complete K\"ahler manifold  and $\omega$ be a K\"ahler form of $M.$  Assume that the universal covering of $M$ is biholomorphic to a ball in $\mathbb C^m.$ Let $f$ be an algebraically nondegenerate meromorphic map of $M$ into a subvariety $V$ of dimension $k$ in $\P^n(\C)$. Let $\mathcal Q=\{Q_1,\ldots,Q_q\}$ be a family of $q$ hypersurfaces in $\P^n(\C)$ with the distributive constant $\Delta_{\mathcal Q,V}$ with respect to $V$.  Let $d = lcm\{\deg D_1,\ldots,\deg D_q\}$. Assume that for some $\rho \ge 0,$ there exists a bounded continuous function $h \geq 0$ on $M$ such that
$$\rho\Omega_f + \mathrm{dd^c}\log h^2 \geq \mathrm{ric}\ \omega.$$
Then, for each  $\epsilon>0,$ we have
$$\sum_{j=1}^q {\delta}^{[L-1]}_{f} (Q_j) \le \Delta_{\mathcal Q,V}(k+1)+ \epsilon +\dfrac{\rho(L-1)(\Delta_{\mathcal Q,V}(k+1)+\epsilon)}{ud},$$
where $L=L(d,k,\deg V,\Delta_{\mathcal Q,V},\epsilon)$ and $u=u(d,k,\deg V,\Delta_{\mathcal Q,V},\epsilon).$
\end{theorem}

We give some sentences to explain the correction. Indeed, with the notation in Theorem \ref{TheoremG}, the defect relation obtained in \cite[Theorem 1.1]{NQ} is
$$\sum_{j=1}^q {\delta}^{[L-1]}_{f} (Q_j) \le \Delta_{\mathcal Q,V}(k+1)+ \epsilon +\dfrac{\rho(L-1)(k+1)}{ud}.$$
However, in the assumption 
$$\sum_{j=1}^q\delta^{[L-1]}_f(Q_j)>\dfrac{k+1}{x}+\dfrac{\rho (L-1)Lb}{d}$$ 
(see the third line from the bottom of page 692 in \cite{NQ}), the coefficient $x$ in the denominator of the second term was omitted. The correct assumption should be
$$\sum_{j=1}^q\delta^{[L-1]}_f(Q_j)>\frac{k+1}{x}+\dfrac{\rho (L-1)Lb}{xd}.$$
Here $x$ is a positive number satisfying
$$\frac{k+1}{x}\le \Delta_{\mathcal Q,V}(k+1)+\epsilon$$ 
(see \cite[inequality (13)]{NQ}), and  
$$b=\frac{k+1}{uL}.$$ 
Since the above assumption does not hold, we instead obtain
$$ \sum_{j=1}^q\delta^{[L-1]}_f(Q_j)\le \frac{k+1}{x}+\dfrac{\rho (L-1)Lb}{xd}$$
and consequently
$$\sum_{j=1}^q\delta^{[L-1]}_f(Q_j)\le \Delta_{\mathcal Q,V}(k+1)+\epsilon+ \dfrac{\rho(L-1)(\Delta_{\mathcal Q,V}(k+1)+\epsilon)}{ud}.$$

\begin{proof}[Proof of Theorem \ref{1.4new}]
We again use the argument in the proof of Theorem \ref{1.2new}.

(a)  Take the subset $\Gamma_0$ of $\{1,\ldots,q\}$ and the subfamily $\mathcal P_0=\{Q_i;i\in\Gamma_0\}$ of $\mathcal Q$ as the conclusion of  Lemma \ref{4.1}. Set $L_0'=L(d,k,\deg V,\Delta_{\mathcal P_0,V},\epsilon)$ and $u_0'=u(d,k,\deg V,\Delta_{\mathcal P_0,V},\epsilon)$. Since $\Delta_{\mathcal P_0,V}\le \tau_0$, we have $L_0'\le L_0$ and $\frac{L_0'-1}{u_0'}\le \frac{L_0-1}{u_0}$. Then, by applying \ref{TheoremG} for $f$ and the family $\mathcal P_0$, we obtain
\begin{align*}
\sum_{j=1}^q {\delta}^{[L_0-1]}_{f} (Q_j)&\le (q-\sharp\Gamma_0)+\sum_{j\in\Gamma_0}{\delta}^{[L_0'-1]}_{f}(Q_j)\\
&\le \left\lfloor \frac{N-k+3}{2}\right\rfloor+\Delta_{\mathcal P_0,V}(k+1)+ \epsilon +\dfrac{\rho(L_0'-1)(\Delta_{\mathcal P_0,V}(k+1)+\epsilon)}{u_0'd}\\
&\le \left\lfloor \frac{N-k+3}{2}\right\rfloor+\tau_0(k+1)+ \epsilon +\dfrac{\rho(L_0-1)(\Delta_{\mathcal Q,V}(k+1)+\epsilon)}{u_0d}.
\end{align*}
We get the desired inequality.

(b) Repeating the argument in the proof of (a), with Lemma \ref{4.2} in place of Lemma \ref{4.1}, we will get the assertion (b).
\end{proof}

\section{Schmidt's subspace theorem}

Let $k$ be a number field. Denote by $M_k$ the set of places of $k$ and by $M_k^\infty$ the set of archimedean places. For each $v\in M_k$, fix a normalized absolute value $|\cdot|_v$ such that $|\cdot|_v=|\cdot|$ on $\mathbb Q$ if $v$ is archimedean, and $|p|_v=p^{-1}$ if $v$ is non-archimedean lying above a prime $p$. Let $k_v$ be the completion of $k$ at $v$, and set
$$
n_v:=[k_v:\mathbb Q_v]/[k:\mathbb Q].
$$
Define $\|x\|_v=|x|_v^{n_v}$. Then the product formula reads
$$
\prod_{v\in M_k}\|x\|_v=1,\quad x\in k^*.
$$

For ${\bf x}=(x_0,\ldots,x_N)\in k^{N+1}$, set
$$
\|{\bf x}\|_v:=\max\{\|x_0\|_v,\ldots,\|x_N\|_v\},\quad v\in M_k.
$$
The (absolute logarithmic) height of ${\bf x}=(x_0:\cdots:x_N)\in\P^N(k)$ is
$$
h({\bf x}):=\sum_{v\in M_k}\log \|{\bf x}\|_v.
$$

Let $Q=\sum_{I\in\mathcal T_d}a_I{\bf x}^I$ be a homogeneous polynomial of degree $d$ in $k[x_0,\ldots,x_N]$, where ${\bf x}^I=x_0^{i_0}\cdots x_N^{i_N}$ for $I=(i_0,\ldots,i_N)$. Define
$$
\|Q\|_v:=\max\{\|a_I\|_v:\ I\in\mathcal T_d\}.
$$
For each $v\in M_k$, the Weil function $\lambda_{Q,v}$ is defined by
$$
\lambda_{Q,v}({\bf x}):=\log\frac{\|{\bf x}\|_v^d\cdot \|Q\|_v}{\|Q({\bf x})\|_v},\quad {\bf x}\in\P^N(k)\setminus\{Q=0\}.
$$

The following theorem is due to the first author \cite{Q22}.
\vskip0.2cm
\noindent
\textbf{Theorem F} (reformulation, \cite{Q22}) {\it  Let $k$ be a number field, $S$ be a finite set of places of $k$ and let $V$ be an irreducible projective subvariety of $\P^N$ of dimension $n$ defined over $k$. Let $\mathcal Q=\{Q_1,\ldots, Q_q\}$ be a family of $q$ homogeneous polynomials of $\bar k[x_0,\ldots,x_N]$ with the distributive constant $\Delta_{\mathcal Q,V}$ with respect to $V(\bar k)$. Then for each $\epsilon >0$, 
$$\sum_{v\in S}\sum_{j=1}^q\dfrac{\lambda_{Q_j,v}({\bf x})}{\deg Q_j}\le (\Delta_{\mathcal Q,V}(n+1)+\epsilon)h({\bf x})$$
for all ${\bf x}\in\P^N(k)$ outside a union of closed proper subvarieties of $V$.}

\begin{proof}[Proof of Theorem \ref{1.5new}]

(a)  Take the subset $\Gamma_0$ of $\{1,\ldots,q\}$ and the subfamily $\mathcal P_0=\{Q_i;i\in\Gamma_0\}$ of $\mathcal Q$ as the conclusion of  Lemma \ref{4.1}. Then, by applying Theorem F for $f$ and the family $\mathcal P_0$, we obtain
\begin{align*}
\sum_{v\in S}\sum_{j=1}^q\dfrac{\lambda_{Q_j,v}({\bf x})}{\deg Q_j}&\le (q-\sharp\mathcal P_0)h({\bf x})+\sum_{v\in S}\sum_{j\in\Gamma_0}\dfrac{\lambda_{Q_j,v}({\bf x})}{\deg Q_j}\\
&\le \left\lfloor \frac{\ell-n+3}{2}\right\rfloor h({\bf x})+\left(\Delta_{\mathcal P_0,V}(n+1)+\epsilon\right)h({\bf x})\\
&\le \left(\left\lfloor \frac{\ell-n+3}{2}\right\rfloor+\frac{(\ell-n+2)(n+1)}{2}+\epsilon\right)h({\bf x})
\end{align*}
for all ${\bf x}\in\P^N(k)$ outside a union of closed proper subvarieties of $V$.
We get the desired inequality.

(b) The proof of the assertion (b) is the same of the proof of the assertion (a) with Lemma \ref{4.2} in place of Lemma \ref{4.1}. Hence, we will omit the details of its proof.
\end{proof}

\section*{Data availability}
Data sharing not applicable to this article as no datasets were generated or analyzed during the current study.

\section*{Disclosure statement}
No potential conflict of interest was reported by the author(s).

\end{document}